\renewcommand*{\backref}[1]{}
\renewcommand*{\backrefalt}[4]{\quad \tiny
  \ifcase #1 (\textbf{NOT CITED.})%
  \or    (Cited on page~#2.)%
  \else   (Cited on pages~#2.)%
  \fi}
\numberwithin{equation}{section}     
\setlist[enumerate,1]{label={\upshape(\roman*)},ref=\roman*}
\setlist[enumerate,2]{label={\upshape(\alph*)},ref=\alph*}
\newcommand{\R}{\mbox{$\mathbb{R}$}}
 \def\ZZ{{\mathbb Z}}
    \def\cS{\mathcal{S}}
    \def\cU{\mathcal{U}}
\def\cD{\mathcal{D}}    
    \def\cW{\mathcal{W}}
\def\cF{\mathcal{F}}  \def\cL{\mathcal{L}}
\newtheorem*{teo*}{Theorem}
\newtheorem{teo}{Theorem}[section]
\newtheorem{addendum}[teo]{Addendum}
\newtheorem{fact}[teo]{Fact}
\newtheorem*{af}{Claim}
\newtheorem{lema}[teo]{Lemma}
\newtheorem{prop}[teo]{Proposition}
\newtheorem{thmintro}{Theorem}
\theoremstyle{definition}
\newtheorem{defi}{Definition}[section]
\theoremstyle{remark}
\newtheorem{obs}[teo]{Remark}
\newcommand{\eps}{\varepsilon}
\title[Incoherence in Seifert manifolds]{Dynamical incoherence for a large class of partially hyperbolic diffeomorphisms}
\author[T.~Barthelm\'e]{Thomas Barthelm\'e}
\address{Queen's University, Kingston, ON}
\email{thomas.barthelme@queensu.ca}
\urladdr{sites.google.com/site/thomasbarthelme}
\author[S.~Fenley]{Sergio R.\ Fenley} 
\address{Florida State University, Tallahassee, FL 32306 }
\email{fenley@math.fsu.edu}
\author[S.~Frankel]{Steven Frankel} 
\address{Washington University in St Louis, St Louis, MO}
\email{steven.frankel@wustl.edu}
\author[R.~Potrie]{Rafael Potrie} 
\address{Centro de Matem\'atica, Universidad de la Rep\'ublica, Uruguay}
\curraddr{Institute for Advanced Study, Princeton, NJ 08540, USA}
\email{rpotrie@cmat.edu.uy}
\urladdr{http://www.cmat.edu.uy/~rpotrie/}
\keywords{Partial hyperbolicity, 3-manifold topology, foliations, classification.}
\subjclass[2010]{37D30,57R30,37C15,57M50,37D20}
\begin{document}

\begin{abstract}
We show that if a partially hyperbolic diffeomorphism of a Seifert manifold induces a map in the base which has a pseudo-Anosov component then it cannot be dynamically coherent. This extends \cite{BGHP} to the whole isotopy class. We relate the techniques with the study of certain partially hyperbolic diffeomorphisms in hyperbolic 3-manifolds performed in \cite{BFFP}. The appendix reviews some consequences of the Nielsen-Thurston classification of surface homeomorphisms to the dynamics of lifts of such maps to the universal cover. 
\end{abstract}

\maketitle

\section{Introduction}
A diffeomorphism $f: M \to M$ of a closed manifold is said to be \emph{partially hyperbolic} if there is a continuous $Df$-invariant splitting $TM = E^s \oplus E^c \oplus E^u$ into non-trivial bundles  and a constant $\ell >0$ such that for every unit vectors $v^\sigma \in E^\sigma(x)$ ($\sigma = s,c,u$) one has that: 

$$ \|Df^\ell v^s \| < \min \{1 , \|Df^\ell v^c \| \} \ \text{ and } \   \|Df^\ell v^u \| > \max \{1 , \|Df^\ell v^c \| \}.  $$

These diffeomorphisms arise naturally in the study of robust dynamical properties (because it is a $C^1$-open property). In addition, they capture quite well the properties of important examples, since, e.g. most homogeneous dynamics and several geometrically defined dynamical systems verify these properties. We refer the reader to \cite{HP-survey,PotICM} for examples and motivation for their study. 

An important feature of the study of the dynamics of partially hyperbolic diffeomorphisms is the possibility of dimension reduction. In a nutshell, one considers the strong bundles $E^s$ and $E^u$ as \emph{well understood} (see e.g. \cite{HPS} for the unique integrability of the bundles and their dynamics) and tries to understand the dynamics along the center direction. For this purpose, it is very useful when one can integrate the center bundle into an $f$-invariant foliation. 

There are several possible ways for the center bundle to be integrable which are discussed in detail in \cite{BurnsWilkinson}.  There it is also explained why the following definition arises as the natural integrability condition to ask a partially hyperbolic diffeomorphism: We say that a partially hyperbolic diffeomorphism $f$ is \emph{dynamically coherent} if there exist $f$-invariant foliations $\cW^{cs}$ and $\cW^{cu}$ tangent to $E^{cs}=E^s \oplus E^c$ and $E^{cu}= E^c \oplus E^u$ respectively. 

There are also many ways a partially hyperbolic diffeomorphism may fail to be dynamically coherent. The first one was observed by Wilkinson (see \cite{BurnsWilkinson}) and reference therein) and has to do with the potential failure of the Frobenius bracket condition on the center direction. 

However, the question of dynamical coherence for partially hyperbolic diffeomorphisms with one dimensional center remained open for quite a while. One needs to notice here that what may fail in this case is the unique integrability due to the lack of smoothness of the bundles. The first non-dynamically coherent examples where presented in \cite{HHU-noncoherent}. In 3-dimensional manifolds with sufficiently small fundamental group we have now a quite good understanding of dynamical coherence (see \cite{HP-survey} for a full account).

More recently, in \cite{BGHP} new examples of (robustly) non-dynamically coherent partially hyperbolic diffeomorphisms where constructed. These present several new features, since they can be made transitive and even absolutely partially hyperbolic (a concept we shall not define here, but for which we refer the reader to \cite{BGHP}). Even if the constructions in \cite{BGHP} are rather flexible, the proof of non-dynamical coherence in that paper depends on some very specific choices of the example and a quite precise control on the bundles. In this paper we provide a new proof of the non-dynamical coherence of these examples which extends to \emph{every} partially hyperbolic diffeomorphism in the same isotopy class of those constructed in \cite{BGHP}. 

The main result is the following: 

\begin{thmintro}\label{teo.main}
Let $f: M \to M$ be a partially hyperbolic diffeomorphism in a Seifert 3-manifold with hyperbolic base. Assume that the induced action of $f$ in the base has a pseudo-Anosov component, then $f$ is not dynamically coherent.  
\end{thmintro}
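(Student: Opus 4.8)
The plan is to argue by contradiction. Suppose $f$ is dynamically coherent, with $f$-invariant foliations $\cW^{cs}$ and $\cW^{cu}$ tangent to $E^{cs}=E^s\oplus E^c$ and $E^{cu}=E^c\oplus E^u$; these are automatically transverse, so their leafwise intersection is a one-dimensional $f$-invariant foliation $\cW^c$ tangent to $E^c$. Since both partial hyperbolicity and dynamical coherence pass to powers and to finite covers, I would first reduce to the case in which $M$ is a genuine circle bundle $p\colon M\to S$ over a closed hyperbolic surface $S$ and $f$ induces $g\colon S\to S$, in Nielsen--Thurston canonical form, possessing a $g$-invariant subsurface $S_0$ on which $g$ is pseudo-Anosov with transverse invariant singular foliations. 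In the universal cover, $\widetilde M\cong\mathbb{H}^2\times\RR$, the bundle becomes $\widetilde p\colon\widetilde M\to\mathbb{H}^2$ with line fibers, the central subgroup $\langle\gamma\rangle\cong\ZZ<\pi_1(M)$ generated by a fiber translates along these fibers, and a lift $\widetilde f$ of $f$ coarsely preserves $\widetilde p$, inducing on $\mathbb{H}^2$ and on its boundary circle $S^1=\partial\mathbb{H}^2$ the same dynamics as a lift $\bar g$ of $g$. The needed properties of $\bar g$ on $S^1$ are supplied by the appendix.

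The first substantial step is to pin down the position of $\cW^{cs}$ and $\cW^{cu}$ relative to the fibration. A dynamically coherent partially hyperbolic diffeomorphism has center-stable and center-unstable foliations with no Reeb components, so $\cW^{cs}$ and $\cW^{cu}$ are taut. By the classification of taut foliations on circle bundles over hyperbolic surfaces, each of them is, up to isotopy, either horizontal or vertical; but a vertical foliation of $M$ would induce a nonsingular one-dimensional foliation of the closed hyperbolic surface $S$, which does not exist. Hence both $\cW^{cs}$ and $\cW^{cu}$ are horizontal. Consequently each leaf of $\widetilde\cW^{cs}$ (resp.\ $\widetilde\cW^{cu}$) is a graph over $\mathbb{H}^2$, the leaf spaces $\cL^{cs}$ and $\cL^{cu}$ are homeomorphic to $\RR$, the central generator $\gamma$ acts on each of them as a fixed-point-free translation, and this action commutes with the $\pi_1(M)$-action and, for a suitable choice of lift, with $\widetilde f$.

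Now I would read off an $\bar g$-invariant structure on $S^1$ from the center foliation. A leaf of $\widetilde\cW^c$ is the intersection of two transverse graphs over $\mathbb{H}^2$, hence a properly embedded line in $\mathbb{H}^2$ with two well-defined distinct ideal endpoints in $S^1$; restricting $\cW^c$ to a single leaf $L$ of $\widetilde\cW^{cs}$ (a plane identified with $\mathbb{H}^2$) yields a foliation of $L$ by such lines, whose endpoint pairs form a genuine lamination $\Lambda_L$ of $S^1$, and the family $\{\Lambda_L\}$ is permuted $\pi_1(M)$- and $\widetilde f$-equivariantly over the actions of $\pi_1(S)$ and $\bar g$. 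The contradiction should then come from the pseudo-Anosov piece: choosing a periodic point $x$ of $g$ in $S_0$ and a lift fixing a lift $\tilde x$, the prong structure of the pseudo-Anosov foliations at $x$ forces $\bar g$ (or a power) to have at least four fixed points on $S^1$ with alternating attracting and repelling behavior, organized by the two transverse invariant foliations; this is the input recorded in the appendix and is the exact analogue of the branching phenomena exploited in \cite{BFFP}. Such dynamics cannot preserve the lamination $\Lambda_L$ carried by the plane $L$: at the ideal endpoint of a periodic prong it would force at least three incompatible limiting directions of center leaves inside $L$, which is impossible for a foliation of a plane, whose leaf space is a simply connected one-manifold with controlled branching. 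This contradicts dynamical coherence and, in particular, recovers and extends \cite{BGHP} to the whole isotopy class.

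I expect the last step --- making rigorous the passage from horizontal center leaves to well-defined, $\widetilde f$-equivariant ideal endpoints on $S^1$, and then confronting the resulting lamination with the boundary dynamics of $\bar g$ coming from the pseudo-Anosov component --- to be the main obstacle; this is precisely where the techniques of \cite{BFFP} and the Nielsen--Thurston input of the appendix are indispensable. A secondary technical point is to carry out the initial reduction to a circle bundle over an honest hyperbolic surface while preserving a genuine pseudo-Anosov piece in the induced base map, which is handled by passing to a power of $f$ and a finite cover, using that the Seifert fibration of $M$ and the Nielsen--Thurston data of $g$ lift.
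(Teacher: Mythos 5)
Your opening reductions match the paper: pass to a finite cover so $M$ is a circle bundle over a closed hyperbolic surface, invoke \cite{HaPS} to make $\cW^{cs}$ and $\cW^{cu}$ horizontal, lift to $\widetilde M\cong\mathbb{H}^2\times\RR$, and calibrate a lift $\widetilde f$ against a lift $\bar g$ of the base map via Theorem \ref{t.goodliftsurfacemap}. From there, however, the proposal diverges sharply from the paper and runs into a genuine gap.

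The crucial unjustified step is the claim that a leaf of $\widetilde\cW^{c}$, being a component of the intersection of two transverse graphs over $\mathbb{H}^2$, is ``a properly embedded line in $\mathbb{H}^2$ with two well-defined distinct ideal endpoints in $S^1$.'' Properness is fine, but properly embedded lines in $\mathbb{H}^2$ need \emph{not} have ideal endpoints: an end can spiral toward the circle at infinity without converging (a Reeb-type picture inside a half-plane already does this). Neither dynamical coherence nor horizontality makes center leaves quasigeodesic, and the paper nowhere asserts anything of the sort. So the lamination $\Lambda_L$ has no a priori reason to exist, and everything downstream collapses. The appendix does not repair this: it concerns boundary dynamics of lifts of surface homeomorphisms, not ideal behaviour of center leaves. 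A second structural problem is that $\Lambda_L$ is attached to a specific $cs$-leaf $L$, and $\widetilde f$ sends $L$ to $\widetilde f(L)\neq L$ in general, so there is no self-map of $\Lambda_L$ on which to run a fixed-point/lamination contradiction. You would first need a \emph{periodic} center-stable leaf, and producing one is itself a nontrivial step that your proposal does not address. Finally, the intended contradiction (``at least three incompatible limiting directions of center leaves inside $L$, which is impossible for a foliation of a plane'') is not a precise statement: the leaf space of a plane foliation is a possibly non-Hausdorff simply connected one-manifold and can certainly exhibit branching; ruling out the configuration you have in mind requires an argument, not an appeal to impossibility.

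For comparison, the paper's argument avoids ideal endpoints of center leaves entirely. It works in the intermediate cover $\hat M=\widetilde M/\langle\delta\rangle\cong\mathbb{H}^2\times S^1$, where the leaf space $\cL^{cs}$ of $\hat\cW^{cs}$ is a \emph{circle}. Choosing a lift $\hat f$ whose base lift $F$ has the four-fixed-point boundary dynamics of Theorem \ref{t.goodliftsurfacemap}, and using the coarse displacement bound of item (i)(b) there together with Fact \ref{f.bdistance}, one constructs a compact $\hat f$-invariant set $T_{\hat f}$ meeting every $cs$-leaf (Proposition \ref{p.compactset}). Recurrence in $T_{\hat f}$ plus contraction of unstable arcs under $\hat f^{-1}$ gives a periodic leaf in $\cL^{cs}$ (Proposition \ref{p.periodic}), and a Lefschetz index argument inside that leaf combined with local unstable expansion shows that \emph{every} periodic point of $\hat f$ on $\cL^{cs}$ is transversally repelling (Proposition \ref{p.transverseexpansion}). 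Since no circle homeomorphism can have periodic points that are all repelling, this is the contradiction. It is this compact-invariant-set and leaf-space-is-a-circle mechanism, rather than a lamination built from center leaves, that makes the argument go through.
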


We will say that a 3-manifold is Seifert with hyperbolic base, if it admits a finite cover which is a circle bundle over a surface of genus $g \geq 2$. This is not the standard definition, but since to show Theorem \ref{teo.main} we are allowed to take a finite cover this is a convenient definition. (We refer the reader to \cite{HaPS} and references therein for other definitions and equivalences of a 3-manifold being Seifert with hyperbolic base.)

The fact that every map on such a circle bundle is homotopic to a fiber preserving one is standard (see eg. \cite{He}), and this explains what we mean by the induced action on the base (in this case, up to finite cover, on the surface). We will not define having a pseudo-Anosov component here, but we will state a result which is what we need to use about this in Theorem \ref{t.goodliftsurfacemap}.  We will give a more detailed treatment with the important references in Appendix \ref{appendixA}. 

Notice that in \cite{BFFP} (see also \cite{BFFP-announce}) we have shown that if $f$ is a partially hyperbolic diffeomorphism in a Seifert manifold $M$ which is homotopic to the identity, then $f$ must be dynamically coherent (and we even give a full classification of such maps). In \cite{BGHP} it is shown that if $f$ in a Seifert manifold induces the identity in the base, then it has to be homotopic to the identity. This leaves open the case of the examples which first appeared in \cite{BGP} where the action in the base is a Dehn-twist. It will become clear from the techniques (and the results in the appendix) why this case cannot be treated with the ideas we use to prove Theorem \ref{teo.main}. 

We end the introduction by saying that even if independent of \cite{BFFP}, this paper shares several ideas with one case which is treated in that paper, which we call double translation in hyperbolic manifolds (see in particular \cite[Sections 7 and 8]{BFFP}). In section \ref{s.further} we give some positive results in the direction of understanding these partially hyperbolic maps along the lines of what is done in \cite[Section 12]{BFFP}. The link  between these two situations is given by the following fact: in hyperbolic 3-manifolds there is a well developed theory (see  \cite{Thurston,CalegariPA,Fen2002}) of dynamics transverse to certain foliations which contains several features of the pseudo-Anosov dynamics that we use here in the setting of Seifert manifolds to induce dynamics from the ideal boundary to the universal cover of the manifold.

\medskip
\medskip

{\small \emph{Acknowledgements:} We thank A. Gogolev for pointing out the reference \cite{Fathi}.  T.Barthelm\'e was partially supported by the NSERC (Funding reference number RGPIN-2017-04592). S.Fenley was partially supported by Simons Foundation grant numbers 280429 and 637554. S. Frankel was partially supported by National Science Foundation grant number DMS-1611768. Any opinions, findings, and conclusions or recommendations expressed in this material are those of the authors and do not necessarily reflect the views of the National Science Foundation. R. Potrie was partially supported by CSIC 618, FCE-1-2017-1-135352. This work was completed while R.P. was serving as a Von Neumann fellow at IAS, funded by Minerva Research Fundation Membership Fund and NSF DMS-1638352.}

\section{Some preliminaries}\label{s.prelim}
The proof of Theorem \ref{teo.main} will be by contradiction. So we will assume that $f: M \to M$ is a dynamically coherent partially hyperbolic diffeomorphism of a closed Seifert 3-manifold with hyperbolic base and with $f$ acting on the base with a pseudo-Anosov piece (see Appendix \ref{appendixA}). Since a finite cover of $M$ makes it a circle bundle over a orientable surface, and iterates and lifts to finite covers of dynamically coherent partially hyperbolic diffeomorphisms are dynamically coherent we can make the following standing assumptions that will hold in sections \ref{s.compactset}, \ref{s.periodiccs} and \ref{s.teoA}:

\begin{itemize}
\item $M$ is a circle bundle over a closed orientable surface $S$ of genus $g \geq 2$. 
\item $f$ is a dynamically coherent partially hyperbolic diffeomorphism of $M$ with orientable bundles and such that $Df$ preserves their orientation.  We denote by $\cW^{cs}$ and $\cW^{cu}$ the center stable and center unstable foliations respectively. 
\item $f$ induces an automorphism $\rho: \pi_1(S) \to \pi_1(S)$ which has a pseudo-Anosov component. 
\end{itemize}

The last condition can be understood just because the homotopy class of fibers are in the center of $\pi_1(M)$ and therefore $f_\ast: \pi_1(M) \to \pi_1(M)$ preserves the fiber and so induces an automorphism $\rho$ of $\pi_1(S)$. Having a pseudo-Anosov component can be read by this automorphism (see \cite{Thurston-surfaces,Gilman}). We refer to the appendix \ref{appendixA} for more information on this; below, in Theorem \ref{t.goodliftsurfacemap} we state what we need from this information. 

First, we need a very classical property of partially hyperbolic diffeomorphisms (see \cite{HPS}). For a foliation $\cF$ we denote by $\cF(x)$ the leaf through the point $x$. 

\begin{teo}[Stable manifold Theorem] 
Let $f: M \to M$ be a partially hyperbolic diffeomorphism of a closed manifold $M$. Then, the bundles $E^s$ and $E^u$ are uniquely integrable into foliations $\cW^s$ and $\cW^u$ such that if $y \in \cW^s(x)$ (resp. $y \in \cW^u(x)$) one has that $d(f^n(x),f^n(y)) \to 0$ exponentially fast as $n \to +\infty$ (resp. $n \to -\infty$). 
\end{teo}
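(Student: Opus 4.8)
The plan is to run the classical Hadamard--Perron graph-transform construction of strong stable/unstable manifolds, as in \cite{HPS}; we build $\cW^s$, the construction of $\cW^u$ being identical with $f^{-1}$ in place of $f$. Fixing an adapted metric, the partial hyperbolicity estimates hold for a power $f^\ell$ with constants $0<\lambda<1<\mu$: $\|Df^\ell v^s\|\le\lambda$ and $\|Df^\ell v^u\|\ge\mu$ for unit $v^s\in E^s$, $v^u\in E^u$, and moreover $E^s$ is \emph{dominated} by $E^{cu}:=E^c\oplus E^u$, i.e.\ for some $\tau<1$ one has $\|Df^\ell v^s\|\le\tau\,\|Df^\ell v^{cu}\|$ for all unit $v^s\in E^s(x)$, $v^{cu}\in E^{cu}(x)$, uniformly in $x$ (this standard fact uses continuity of the splitting and compactness of $M$: the angle between $E^c$ and $E^u$ is bounded below). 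Fix exponential charts $\exp_x\colon (E^s(x)\oplus E^{cu}(x))\cap B_r\to M$ of uniform radius $r$, in which $f^\ell$ is, uniformly in $x$, a $C^1$-small perturbation of the linear map $Df^\ell_x$ on a ball.

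For each $x$ let $\cG_x$ be the complete metric space of Lipschitz graphs over $E^s(x)$ with values in $E^{cu}(x)$ through the origin, with a fixed small Lipschitz bound. Since $f^{-\ell}$ expands $E^s$ and, by domination, does so more strongly than it acts on $E^{cu}$, $f$ induces a graph transform $\Gamma$ on the space of continuous families $\{G_x\in\cG_x\}_{x\in M}$ whose $x$-member reparametrizes the $f^{-\ell}$-preimage of $G_{f^\ell(x)}$; this $\Gamma$ is a uniform $C^0$ contraction and preserves a closed subset on which the Lipschitz bounds persist, so it has a unique fixed point $x\mapsto W^s_{\mathrm{loc}}(x)$: a continuous family of embedded discs, tangent to $E^s(x)$ at $x$, with $f(W^s_{\mathrm{loc}}(x))\subseteq W^s_{\mathrm{loc}}(f(x))$. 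Standard estimates then identify $W^s_{\mathrm{loc}}(x)=\{\,y:d(f^{n}x,f^{n}y)\le r'\ \forall\,n\ge 0\,\}$ for a suitable $r'>0$, with $d(f^{n}x,f^{n}y)\le C\lambda^{\lfloor n/\ell\rfloor}d(x,y)$ there; setting $\cW^s(x):=\bigcup_{n\ge 0}f^{-n}\!\big(W^s_{\mathrm{loc}}(f^{n}x)\big)$ produces injectively immersed, $f$-invariant submanifolds tangent to $E^s$ at every point, with the asserted exponential convergence.

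Unique integrability is then immediate: if $\gamma$ is a $C^1$ curve tangent to $E^s$ then $\mathrm{length}(f^{n}\gamma)\le\lambda^{\lfloor n/\ell\rfloor}\mathrm{length}(\gamma)\to 0$, so for large $n$ the curve $f^{n}\gamma$ lies in a single local stable manifold, whence $\gamma\subset\cW^s(x)$ for some $x$. Finally, continuity of the family $\{W^s_{\mathrm{loc}}(x)\}$ together with the fact that $E^s$ is a genuine continuous subbundle defined at every point of $M$ yields local product charts, so the $\cW^s(x)$ are precisely the leaves of a foliation $\cW^s$ with $C^1$ leaves (as smooth as $f$). The same argument applied to $f^{-1}$ produces $\cW^u$ with the convergence as $n\to-\infty$.

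The main obstacle is the graph-transform step: one must verify that partial hyperbolicity — in which $E^{cu}$ is only weakly controlled, not uniformly expanded as $E^u$ is in the Anosov case — nevertheless yields the uniform domination between $E^s$ and $E^{cu}$ that is exactly what makes $\Gamma$ a contraction on graphs over $E^s$; once the uniform $C^1$-small perturbation setup and this contraction are in hand, the remaining steps are routine bookkeeping. It is worth stressing that continuity, rather than mere measurability, of the leaves of $\cW^s$ and $\cW^u$ rests on continuity of the bundles $E^s$, $E^u$ and on compactness of $M$.
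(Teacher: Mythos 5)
The paper does not actually prove this theorem; it is quoted as a ``very classical property'' with a citation to \cite{HPS}, and that is precisely the Hadamard--Perron graph-transform argument your sketch reproduces. Your outline is correct and faithful to the cited source: the adapted metric ensuring uniform domination of $E^s$ by $E^{cu}$, the graph transform contracting on families of Lipschitz graphs, the characterization of $W^s_{\mathrm{loc}}(x)$ by forward boundedness, and the length-contraction argument for unique integrability are exactly the ingredients of the reference proof, so there is nothing to compare beyond noting that you have supplied the proof the paper omits.
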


We state a result from \cite{HaPS} which allows to reduce to the case where the foliations are horizontal

\begin{teo}[\cite{HaPS}] 
Let $f$ be a dynamically coherent partially hyperbolic diffeomorphism of a circle bundle over a surface. Then, the foliations  $\cW^{cs}$ and $\cW^{cu}$ are horizontal. 
\end{teo}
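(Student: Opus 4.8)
The plan is to reduce the statement to the classification of taut foliations on Seifert fibered spaces, the dynamics of $f$ entering only in order to exclude Reeb components. Since replacing $f$ by $f^{-1}$ exchanges the roles of $E^s$ and $E^u$, it is enough to treat $\cW^{cs}$; the argument for $\cW^{cu}$ is then identical. The first and crucial step is to show that $\cW^{cs}$ has no Reeb component. This is the only place where partial hyperbolicity is used, and here I would invoke the theorem of Burago--Ivanov that the center-stable foliation of a partially hyperbolic diffeomorphism of an orientable $3$-manifold is Reebless: morally, the strong unstable foliation $\cW^{u}$ is a one-dimensional $f$-invariant foliation, transverse to $\cW^{cs}$, with uniformly expanded leaves, and such a transverse expanding structure is incompatible with the way center-stable leaves wind around the core of a Reeb component. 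Since $\pi_2(M)=0$ (the universal cover of $M$ is $\RR^{3}$) this Reebless foliation is moreover taut.

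Once tautness is in hand, I would appeal to the classification of taut (Reebless) foliations on Seifert fibered spaces over a hyperbolic base --- due to Brittenham, and, in the case of circle bundles over surfaces, already to Eisenbud--Hirsch--Neumann and Thurston. Since $M$ is a circle bundle over a closed orientable surface $S$ of genus $g\geq 2$ it is not among the small exceptional Seifert manifolds, so the classification applies and shows that $\cW^{cs}$ is isotopic to a foliation which is either horizontal, i.e. transverse to the circle fibers, or vertical, i.e. saturated by fibers. The vertical alternative cannot occur: a vertical foliation of $M$ would descend to a nonsingular one-dimensional foliation of $S$, whereas
\[ \chi(S)=2-2g<0 \]
and so the Poincar\'e--Hopf theorem forbids any line field on $S$. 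Hence $\cW^{cs}$, and symmetrically $\cW^{cu}$, is horizontal.

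The main obstacle is the very first step: extracting Reeblessness, hence tautness, of the center foliations from the hyperbolic structure of $f$; everything afterwards is classical $3$-manifold topology. A secondary point worth flagging is the gap between ``isotopic to horizontal'' and ``horizontal'': the isotopy produced by the Seifert classification need not commute with $f$, so what one genuinely obtains --- and what is actually used in the subsequent sections --- is that $\cW^{cs}$ and $\cW^{cu}$ may be taken transverse to the circle fibration, and not a stronger $f$-equivariant statement.
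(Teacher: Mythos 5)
Your outline follows the same route as the cited reference \cite{HaPS}: the only dynamical input is the Reeblessness (hence essentiality) of the invariant $cs$/$cu$ foliations, obtained \`a la Burago--Ivanov from transversality with the uniformly expanding strong foliation, after which the Brittenham/Eisenbud--Hirsch--Neumann classification of essential foliations in Seifert fibered spaces gives the horizontal--vertical dichotomy and the vertical case is killed by $\chi(S)<0$. Your closing caveat --- that one obtains transversality to a possibly modified circle fibration rather than an $f$-equivariant horizontality --- is exactly the reading the paper itself gives in the sentence immediately following the theorem.
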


This means that one can modify the fibration in order that every fiber is transverse to the leaves of $\cW^{cs}$ and $\cW^{cu}$. In particular, one can lift a hyperbolic metric on the base surface to the leaves to have a leafwise hyperbolic metric on the leaves. (We notice however that one can not in principle assume that the same fibration is transverse to both foliations simultaneously. For this reason, from now on we will work only with the center stable foliation $\cW^{cs}$, of course, symmetric statements also hold for $\cW^{cu}$ after changing the fibration.)

Let $\widetilde M$ be the universal cover of $M$. Let $\delta \in \pi_1(M)$ be a deck transformation of $\widetilde M$ associated to the center of $\pi_1(M)$ which corresponds to the homotopy class of a fiber with a given orientation. Since (up to iterate) $f$ preserves $\delta$ it follows that $f$ lifts to the intermediate cover $\hat M = \widetilde M/_{<\delta>}$. Clearly, there are several lifts of $f$. 

By putting a hyperbolic metric in $S$, and lifting the metric to the leaves of $\cW^{cs}$ we can identify $\hat M \cong \mathbb{H}^2 \times S^1$.  The foliation $\cW^{cs}$ lifts to a foliation $\hat \cW^{cs}$ for which the canonical projection $p : \mathbb{H}^2 \times S^1 \to \mathbb{H}^2$ into the first coordinate is a isometry. Notice that in this lift, given a point $x \in \mathbb{H}^2$ and a leaf $L \in \hat \cW^{cs}$  one has that $p^{-1}(x) \cap L$ is a unique point. We denote by $d_{\mathbb{H}^2}$ to the hyperbolic distance in $\mathbb{H}^2$ and for $L \in \hat \cW^{cs}$  as $d_L$ the induced distance in the leaf. 

In particular, one gets that the \emph{leaf space} of this foliation (i.e. the quotient space $\cL^{cs}= \hat M /_{\hat \cW^{cs}}$) is a topological circle. The strategy of the proof of Theorem A by contradiction is to show that the action of a nice lift $\hat f$ of $f$ in $\cL^{cs}$ is everywhere expanding by using the fact that transversally to $\hat \cW^{cs}$ we have the unstable foliation, this will give that there must be periodic points in $\cL^{cs}$ and all of them should be expanding, a contradiction. However, to be able to perform the argument, we need to show that there are points in every leaf of $\hat \cW^{cs}$ of $\hat M$ which do not escape to infinity by application of $\hat f$ since otherwise the expansion could be produced by the holonomy of the foliation\footnote{ A nice example to understand this is the time one map of the geodesic flow in negative curvature, for which there is a lift where every leaf of the center foliation is fixed, but this does not contradict partial hyperbolicity.}. To get such compact invariant sets in $\hat M$ we will choose specific lifts of $f$ to $\hat M$ which have some specific actions in the boundary at infinity that will allow us to control the dynamics in the interior of each leaf. 

Notice if one takes any lift $\hat f : \hat M \to \hat M$, since $f$ is isotopic to a fiber preserving map, one can define a projection $F: \mathbb{H}^2 \to \mathbb{H}^2$ which is the lift of a homeomorphism $h$ of $S$ induced by the fiber preserving map homotopic to $f$ (notice that $h$ is not canonically defined, but it is well defined up to homotopy which is what we care about). Since the projection $p$ restricted to each leaf of $\hat \cW^{cs}$ is a isometry we deduce that: 

\begin{fact}\label{f.bdistance}
There is a uniform $K_0>0$ so that for every $x \in \hat M$, if $L = \hat f (\cW^{cs}(x))$ one has that: 
\[ | d_{\mathbb{H}^2} (p(x), F(p(x))) - d_{L} (\hat f(x), p^{-1}(p(x))\cap L) | < K_0 . \]
\end{fact}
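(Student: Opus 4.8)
\medskip
\noindent\emph{Sketch of proof.} The plan is to deduce the estimate from three facts: that $\hat f$ stays a bounded distance from a fiber-preserving model of $f$, that the lifted bundle projection $p$ is globally Lipschitz, and that $p$ restricted to each leaf of $\hat\cW^{cs}$ is an isometry.

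First I would fix a fiber-preserving homeomorphism $g$ of $M$ homotopic to $f$, together with a homotopy $H : M\times[0,1]\to M$ from $f$ to $g$; then $g$ induces on $S$ the homeomorphism $h$ mentioned above. Since $M$ is compact, the tracks $t\mapsto H(z,t)$ have length at most some constant $C_1>0$. Lifting $H$ with initial value the chosen lift $\hat f$ yields a homotopy in $\hat M$ from $\hat f$ to the lift $\hat g$ of $g$ whose tracks have the same lengths, so $d_{\hat M}(\hat f(x),\hat g(x))\le C_1$ for every $x\in\hat M$. As $g$ is fiber-preserving, in the coordinates $\hat M\cong\mathbb{H}^2\times S^1$ one has $\hat g(y,\theta)=(F(y),\ast)$, where $F:\mathbb{H}^2\to\mathbb{H}^2$ is exactly the lift of $h$ determined by $\hat f$; in particular $p(\hat g(x))=F(p(x))$. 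Because the bundle map $M\to S$ is smooth over a compact base it is Lipschitz for the metric on $M$ and the fixed hyperbolic metric on $S$, hence so is $p$, say with constant $\lambda\ge1$. Therefore
\[
  d_{\mathbb{H}^2}\bigl(p(\hat f(x)),F(p(x))\bigr)=d_{\mathbb{H}^2}\bigl(p(\hat f(x)),p(\hat g(x))\bigr)\le\lambda\, d_{\hat M}(\hat f(x),\hat g(x))\le\lambda C_1 .
\]

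Next I would pass to the leafwise picture. Since $f$ preserves $\cW^{cs}$, its lift $\hat f$ preserves $\hat\cW^{cs}$, so $L=\hat f(\hat\cW^{cs}(x))$ is a leaf of $\hat\cW^{cs}$ and contains $\hat f(x)$. Writing $z:=p^{-1}(p(x))\cap L$, which is a single point with $p(z)=p(x)$ as noted above, the fact that $p|_L:(L,d_L)\to(\mathbb{H}^2,d_{\mathbb{H}^2})$ is an isometry gives
\[
  d_L\bigl(\hat f(x),\,p^{-1}(p(x))\cap L\bigr)=d_{\mathbb{H}^2}\bigl(p(\hat f(x)),p(z)\bigr)=d_{\mathbb{H}^2}\bigl(p(\hat f(x)),p(x)\bigr).
\]
Finally I would apply the reverse triangle inequality in $\mathbb{H}^2$ to the triple $p(x)$, $p(\hat f(x))$, $F(p(x))$, obtaining
\[
  \bigl|\, d_{\mathbb{H}^2}(p(x),F(p(x)))-d_{\mathbb{H}^2}(p(x),p(\hat f(x)))\,\bigr|\le d_{\mathbb{H}^2}\bigl(p(\hat f(x)),F(p(x))\bigr)\le\lambda C_1,
\]
and substitute the previous identity to conclude with $K_0:=\lambda C_1$.

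The argument is elementary; the only point that needs care is \emph{uniformity}, i.e.\ that $C_1$ and $\lambda$, hence $K_0$, depend neither on $x$ nor on the non-canonical choices of $g$, $h$ and $F$. This is exactly where compactness of $M$ enters: once $g$ is fixed, the downstairs homotopy has uniformly bounded tracks, and lifting preserves path lengths. One should also keep in mind that $d_L$ only measures paths inside $L$, which costs nothing here because $p|_L$ is an isometry onto all of $\mathbb{H}^2$.
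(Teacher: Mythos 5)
Your argument is correct and fills in exactly the ingredients that the paper leaves implicit (the paper's justification is the single remark that $p$ restricted to a leaf is an isometry): bounded homotopy tracks give $d_{\hat M}(\hat f(x),\hat g(x))\le C_1$, Lipschitz continuity of the lifted bundle projection pushes this estimate down to $\mathbb{H}^2$, and the leafwise isometry together with the reverse triangle inequality closes the gap. The one hypothesis worth making explicit, which you do address, is that the particular lift $F$ of $h$ appearing in the statement must be the one determined by lifting the homotopy from $\hat f$; with that choice $p\circ\hat g=F\circ p$ holds and the rest is mechanical.
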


Said otherwise, one can compare the action that a given lift of the mapping class of $f$ induces on $\mathbb{H}^2$ with  the action of the corresponding lift $\hat f$ on the leaves of $\hat \cW^{cs}$. Notice that if $\hat f$ preserves a center stable leaf $L$ then the second distance becomes $d_L(\hat f(x), x)$. 

What we need to know about maps of surfaces having a pseudo-Anosov component is the following: 

\begin{teo}\label{t.goodliftsurfacemap}
Let $h: S \to S$ be a surface homeomorphism having a pseudo-Anosov component where $S$ is a hyperbolic closed surface. Then, there exist a lift $\tilde h: \mathbb{H}^2 \to \mathbb{H}^2$ to the universal cover $\tilde S \sim \mathbb{H}^2$ such that:
\begin{enumerate}
\item The lift $\tilde h$ extends to $E=\partial_{\infty} \mathbb{H}^2$ as a circle homeomorphism which has exactly four fixed points, two attracting and two repelling. 
\item If we denote by $\cD = \mathbb{H}^2 \cup E$ the Gromov compactification of $\mathbb{H}^2$ one gets a a homeomorphism (also called) $\tilde h : \cD \to \cD$ of the closed disk so that: 
\begin{enumerate}
\item The attracting points in $E$ verify that they have a basis of neighborhoods $U_n$ in $\cD$ such that $d_{\mathbb{H}^2}(\partial U_n, \tilde h(U_n)) \to \infty$. The symmetric statement holds for repelling points looking at $\tilde h^{-1}$. 
\item For every $K>0$ there is a compact set $D \subset \mathbb{H}^2$ such that for every $y \notin D$ one has that $d_{\mathbb{H}^2}(y, \tilde h(y)) > K$. 
\end{enumerate}
\end{enumerate}
\end{teo}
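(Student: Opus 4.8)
\emph{Reduction to the pseudo-Anosov piece.} The plan is to put $h$ in Nielsen--Thurston form — an $h$-invariant geodesic multicurve $\Gamma$ whose complementary pieces $h$ permutes, with each first return periodic or pseudo-Anosov — and to work on a pseudo-Anosov piece $P_0$, say with first return $h^N|_{P_0}$ of dilatation $\lambda>1$ and invariant singular measured foliations $\mathcal F^s_0,\mathcal F^u_0$ (if $\Gamma=\emptyset$ then $h$ is pseudo-Anosov and $N=1$; this is the model case). Lifting $\Gamma$ to $\mathbb H^2=\widetilde S$ cuts $\mathbb H^2$ into convex regions covering the pieces; fix one, $\widetilde P_0$, over $P_0$. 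I would then choose a lift $\tilde h$ of $h$ so that $\tilde h^{N}$ fixes $\widetilde P_0$ and restricts there to a lift $g$ of $h^N|_{P_0}$ that fixes a full unstable leaf $\ell^u$ and a full stable leaf $\ell^s$ of the lifted foliations (possible because pseudo-Anosov maps have invariant prongs at periodic points), and, among the finitely many remaining choices, one for which every power of $\tilde h$ fixing a complementary region restricts there to a nontrivial — hence hyperbolic — deck transformation rather than the identity, and for which the endpoints of $\ell^u,\ell^s$ are not endpoints of any lift of a curve of $\Gamma$. That such a lift exists is where ``$h$ has a pseudo-Anosov component'' really enters: the pseudo-Anosov region rigidifies the lift and lets one rule out the degenerate alternatives piece by piece. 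Since $\tilde h$ is a quasi-isometry of $\mathbb H^2$ it extends to a homeomorphism of $\cD$, which is the last sentence of (2).

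\emph{The four fixed points and (1).} In $\widetilde P_0$ the leaves of the lifted foliations are geodesics of the lifted singular flat metric, hence uniform quasigeodesics of $\mathbb H^2$, so each has two distinct endpoints on $E$. The invariant leaf $\ell^u$ gives points $a_1,a_2\in E$; since $g$ multiplies flat length along unstable leaves by $\lambda$, it translates along $\ell^u$ toward the $a_i$, so these are attracting for $g|_E$; symmetrically $\ell^s$ gives repelling points $r_1,r_2$, and because a stable and an unstable leaf cross, $r_1,a_1,r_2,a_2$ alternate around $E$. To pass from $\tilde h^{N}$ to $\tilde h$: the map $\tilde h$ permutes $\mathrm{Fix}(g|_E)$ preserving orientation and this cyclic order, and the ``no identity behaviour'' requirement forbids a nontrivial permutation, so $\tilde h$ fixes the four points. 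There are no others: inside $\overline{\widetilde P_0}\cap E$ the expansivity of pseudo-Anosov foliations (every transverse arc is stretched past a fixed size in forward or backward time) leaves only the endpoints of $\ell^u,\ell^s$, and on each complementary arc of $E$ the relevant power of $\tilde h$ is a hyperbolic deck transformation or a lift of another pseudo-Anosov piece, each already in north--south position and contributing nothing new. An orientation-preserving circle homeomorphism with exactly four fixed points alternating attracting/repelling has the two-source/two-sink dynamics of (1).

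\emph{The quantitative statements (2a)--(2b).} For (2b): by construction $y\mapsto d_{\mathbb H^2}(y,\tilde h(y))$ is bounded only on a compact set — over any region a hyperbolic deck transformation has displacement tending to $\infty$ off its axis, and a lift of a pseudo-Anosov piece has displacement, measured in the flat metric (which is quasi-isometric to the hyperbolic one on a region, both covers being quasi-isometric to the same free Cayley graph), growing like $(\lambda^{\pm 1}-1)$ times the flat distance to its finitely many fixed points; patching over the finitely many regions up to $\tilde h$, and absorbing the passage from $\tilde h^{n_P}$ to $\tilde h$ into the quasi-isometry constants, gives $d_{\mathbb H^2}(y,\tilde h(y))\to\infty$, which is (2b) — this also forces no endpoint of a shared reducing geodesic to be among the four fixed points. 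For (2a): near an attracting $a_i$, the flat length of $\ell^u$ between $q$ and $g(q)$ is $(\lambda-1)$ times the flat length from the fixed point on $\ell^u$ to $q$, so $d_{\ell^u}(q,g(q))\to\infty$ as $q\to a_i$. Taking $U_n$ to be the half-plane neighbourhood of $a_i$ cut off by a geodesic $\partial U_n$ meeting $\ell^u$ at a point $q_n\to a_i$, and using that $\tilde h(U_n)$ lies on the $a_i$-side of $\tilde h(\partial U_n)$, quasigeodesic stability gives $d_{\mathbb H^2}(\partial U_n,\tilde h(U_n))\ge d_{\ell^u}(q_n,g(q_n))-O(1)\to\infty$ (so $a_i$ is in fact ``super-attracting'' for $\tilde h|_E$); the statement for repelling points is the same argument applied to $\tilde h^{-1}$.

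\emph{Main obstacle.} The delicate part is the choice of lift together with the ``exactly four'' count: one has to control all complementary regions adjacent to $\widetilde P_0$ simultaneously so that the four-point picture is realized by $\tilde h$ itself — not merely by $\tilde h^{N}$ — and so that no extra fixed point, in particular no endpoint of a reducing geodesic, sneaks onto $E$. This Nielsen--Thurston bookkeeping is the substance of Appendix~\ref{appendixA}, and it is also why the Dehn-twist case, which has no pseudo-Anosov component to pin down the lift, cannot be treated this way.
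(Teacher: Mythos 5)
Your route is genuinely different from the paper's. The paper works in the Handel--Thurston framework: it lifts the two invariant \emph{geodesic laminations} of the pseudo-Anosov piece, identifies the desired lift as a \emph{non-peripheral Nielsen class} of $f|_{S_i}$ that does not preserve a boundary component, concludes from Theorem~\ref{teo.boundarygeneral} that such a lift cannot commute with any deck transformation, and then derives items (2a) and (2b) not from any metric computation but from the abstract ``bounded-displacement forces a commuting deck transformation'' criterion (Lemma~\ref{l.boundarydyn}, Lemma~\ref{l.boundarydyn2}). Your version replaces the lamination picture by the invariant measured foliations and the singular flat metric, and replaces the commuting-deck-transformation dichotomy by explicit quasi-isometry estimates ($(\lambda-1)\cdot$ flat distance, quasigeodesic stability, patching region by region). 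Both are legitimate; the paper's gives (2a) and (2b) in one stroke for \emph{every} fixed point of a lift that commutes with no deck transformation, whereas yours is tied to the particular prong you lift, but is more concrete.

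There is, however, a real gap in your count of ``exactly four.'' You pick a fixed point of $h^N|_{P_0}$ and lift its invariant stable/unstable separatrices, but a fixed point of a pseudo-Anosov may be a $p$-prong singularity with $p\ge 3$, in which case the corresponding lift has $2p\ge 6$ alternating strongly attracting/repelling points at infinity, not $4$. You need to pick a \emph{regular} (2-prong) fixed point, and you must argue one exists. The paper does exactly this, but by a global argument rather than by exhibiting a good fixed point: it observes that a Nielsen class with $>4$ boundary fixed points forces an open ideal polygon complementary to the lifted lamination, and by the area bound of \cite[Lemma 4.1]{HT} only finitely many such polygons exist, so among the (infinitely many) non-peripheral Nielsen classes at least one has exactly four boundary points. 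You should supply an analogue — e.g.\ that for large enough $N$ the pseudo-Anosov has regular fixed points since singularities are finite in number while periodic points grow like $\lambda^N$ — or invoke the area argument directly. Relatedly, your passage from $\tilde h^N$ to $\tilde h$ via ``the `no identity behaviour' requirement forbids a nontrivial permutation'' is not actually justified: there is nothing a priori preventing $\tilde h$ from cyclically permuting the four fixed points of $\tilde h^N$. In the paper this issue is absorbed by the standing ``up to iterate'' convention (see the beginning of Section~\ref{s.prelim} and the first line of the proof of Proposition~\ref{p.compactset}), and in your write-up you should do the same explicitly rather than claim $\tilde h$ itself fixes the four points.
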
 

See the appendix \ref{appendixA} for a proof as well as further information about surface homeomorphisms that may be helpful in understanding further properties of partially hyperbolic diffeomorphisms on Seifert manifolds. The notion in item ($i$)($a$) has to do with the notion of \emph{super attracting/repelling} points we introduce there (cf. Definition \ref{def.stronglyattracting}).This statement can also be found implicit in \cite[Section 9]{Miller} or in \cite{HT}. The difference with the case where the pseudo-Anosov component is the whole surface is that in that case one knows that \emph{every} lift looks more or less as the one given by the previous theorem (with possibly a different number of attracting and repelling points).  

\section{Constructing an invariant compact set in a nice cover}\label{s.compactset}
Consider $f: M \to M$ as in the standing assumptions of section \ref{s.prelim}. In particular, we can assume that the circle bundle $M$ over the surface $S$ verifies that fibers are transverse to $\cW^{cs}$. As explained in the previous section, $f$ is homotopic to a homeomorphism preserving the fibers of the fibration and this induces a homeomorphism $h: S\to S$. By hypothesis, we know that $h$ has a pseudo-Anosov component, so that Theorem \ref{t.goodliftsurfacemap} applies. 

We will therefore choose once and for all a lift $\hat f$ of $f$ to $\hat M$ such that the induced map $F$ in $\mathbb{H}^2$ (as in Fact \ref{f.bdistance}) verifies the conditions ensured by Theorem \ref{t.goodliftsurfacemap}.   

The main result of this section is the following (compare with \cite[Proposition 8.1]{BFFP}). 

\begin{prop}\label{p.compactset} 
There is a compact $\hat f$-invariant set $T_{\hat f} \subset \hat M$ which intersects every $L \in \hat \cW^{cs}$. 
\end{prop}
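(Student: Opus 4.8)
The plan is to construct $T_{\hat f}$ by hand. Keep the good lift $\hat f$ fixed as above, together with the induced map $F\colon\mathbb{H}^2\to\mathbb{H}^2$ of Fact~\ref{f.bdistance}, which satisfies Theorem~\ref{t.goodliftsurfacemap}. First observe that, since $f$ is homotopic to a fiber-preserving map covering $F$, the lift $\hat f$ is at bounded distance in $\hat M$ from the corresponding lift of that fiber-preserving map, which is a bundle map over $F$ on $\hat M\cong\mathbb{H}^2\times S^1$; as $\hat M$ is quasi-isometric to $\mathbb{H}^2$, the map $\hat f$ extends to a homeomorphism of the compactification $\overline{\hat M}\cong\hat M\cup E$, $E=\partial_\infty\mathbb{H}^2$, whose restriction to $E$ is $F|_E$. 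Thus on $E$ the map $\hat f$ has exactly the two super-attracting fixed points $a_1,a_2$ and the two super-repelling fixed points $r_1,r_2$ of Theorem~\ref{t.goodliftsurfacemap}(i), in the cyclic order $a_1,r_1,a_2,r_2$, and the super-attracting property (Theorem~\ref{t.goodliftsurfacemap}(ii)(a)) makes each $a_i$ an attractor and each $r_i$ a repeller for the extension of $\hat f$ to $\overline{\hat M}$. The goal is then to produce a single compact $D'\subset\mathbb{H}^2$ such that every leaf $L\in\hat\cW^{cs}$ contains a point $x_L$ with $p(\hat f^{n}(x_L))\in D'$ for all $n\in\mathbb{Z}$; granting this, $T_{\hat f}:=\{x\in\hat M:p(\hat f^{n}(x))\in D'\text{ for all }n\in\mathbb{Z}\}$ is closed (an intersection of closed sets), $\hat f$-invariant by construction, contained in the compact set $p^{-1}(D')$, and meets every leaf.

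Fix a leaf $L$. Since $a_i$ is an attractor for $\hat f$ acting on $\overline{\hat M}$, it has a trapping neighbourhood $V_i\subset\overline{\hat M}$, and the basin $B_i:=\bigcup_{n\ge 0}\hat f^{-n}(V_i)$ is open and $\hat f$-invariant; moreover $B_i\cap L\neq\emptyset$, because $L$ is coarsely dense in $\hat M$, so its closure in $\overline{\hat M}$ contains $E$ and in particular $a_i$. Identifying $L$ with $\mathbb{H}^2$ via $p$, the sets $B_1\cap L$ and $B_2\cap L$ are disjoint nonempty open subsets of $\mathbb{H}^2$, so by connectedness their union is not all of $L$, and hence $\Sigma^{s}_L:=L\setminus(B_1\cup B_2)$ is a nonempty closed $\hat f$-invariant set. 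A point $x\in\Sigma^{s}_L$ cannot have $\hat f^{n}(x)$ escape to $E$: its $\omega$-limit set in $\overline{\hat M}$ is $\hat f$-invariant, its intersection with $E$ is $F|_E$-invariant, and since $F|_E$ is a circle homeomorphism for which every point other than $r_1,r_2$ is attracted forward to $a_1$ or $a_2$, that intersection, if nonempty, would contain some $a_i$ (forcing $x\in B_i$) or only the repellers $r_i$ (impossible, since a point near $r_i$ is pushed off towards a sink); hence the forward orbit of $x$ stays in a compact part of $\hat M$, and by Theorem~\ref{t.goodliftsurfacemap}(ii)(b) together with Fact~\ref{f.bdistance} this compact part can be taken uniform in $L$. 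Running the same argument for $\hat f^{-1}$, with attractors and repellers interchanged, gives a nonempty closed set $\Sigma^{u}_L\subset L$ whose points have uniformly bounded backward orbit.

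It remains to find $x_L\in\Sigma^{s}_L\cap\Sigma^{u}_L$, with a bound uniform over all leaves; this is the core of the argument, and is the analogue in the Seifert setting of \cite[Proposition~8.1]{BFFP}. The idea is to use the cyclic alternation $a_1,r_1,a_2,r_2$ on $E$ to decompose $\mathbb{H}^2$ into four ``corner'' regions, one for each pair of consecutive ideal fixed points, each trapped by $\hat f$ near its attracting vertex and by $\hat f^{-1}$ near its repelling vertex; the super-attracting and super-repelling strength at these four vertices, together with the coercivity of Theorem~\ref{t.goodliftsurfacemap}(ii)(b), forces $\Sigma^{s}_L$ and $\Sigma^{u}_L$ to meet inside a compact ``core'' region depending only on $F$ and $K_0$, and not on $L$. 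Taking $D'$ to contain this core together with the compact sets bounding the forward and backward orbits above then completes the construction.

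I expect this last step to be the main obstacle. Fact~\ref{f.bdistance} controls only the \emph{size}, not the \emph{direction}, of the displacement of the projection of an $\hat f$-orbit, so ruling out escape to infinity genuinely requires importing the pseudo-Anosov boundary dynamics --- the two attracting and two repelling fixed points and, crucially, their super-attracting and super-repelling character --- and carrying this out with estimates that are uniform over all center-stable leaves. For a genuine pseudo-Anosov flow one would instead have an honest compact invariant chain at hand; here the super-attracting property of Theorem~\ref{t.goodliftsurfacemap}(ii)(a) is exactly what plays that role.
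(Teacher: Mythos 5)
Your overall strategy is the same as the paper's in spirit: use the pseudo-Anosov boundary dynamics of $F$ (alternating super-attracting and super-repelling fixed points, together with the coercivity of Theorem~\ref{t.goodliftsurfacemap}(ii)(b)) and the comparison of Fact~\ref{f.bdistance} to cordon off, in each leaf $L$, a compact set of points whose $\hat f$-orbits cannot escape. The bookkeeping differs slightly --- you extend $\hat f$ to a compactification $\overline{\hat M}$ and argue via basins and $\omega$-limit sets, whereas the paper stays with $F$ on $\mathbb{H}^2$ and transfers estimates through Fact~\ref{f.bdistance} --- but this is a cosmetic difference, and the reduction to finding a point whose projected orbit stays in a fixed compact $D'\subset \mathbb{H}^2$ is the right one.

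The step you yourself flag as the main obstacle is, however, a real gap, not just a detail to be filled in. You produce nonempty closed sets $\Sigma^s_L, \Sigma^u_L \subset L$, but nothing in your argument forces them to meet: two closed subsets of the plane, one separating a pair of opposite basins and the other separating the other pair, need \emph{not} intersect unless one has some control on their connectedness --- a priori they could be disconnected Cantor-like sets whose pieces slide past one another. This is precisely where the technical content of the paper's proof lives. It defines $\hat D = p^{-1}(D \setminus (V^+ \cup V^-))$ and proves, via the inductive Claim, that each $R^n_L = \bigcap_{j=0}^n \hat f^j(\hat D_{\hat f^{-j}(L)})$ contains a compact \emph{connected} set running from $U^+_L$ to $U^-_L$; passing to the limit, and running the symmetric argument for $\hat f^{-1}$ with the $V^\pm$ windows, yields two compact connected sets which, by the alternating cyclic order of $a_1, r_1, a_2, r_2$, must cross. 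Maintaining connectedness inductively under $\hat f$ is the missing idea; it is also what gives, essentially for free, the uniformity over $L$ that your $\omega$-limit argument only asserts, since everything is confined from the outset to the fixed compact $\hat D$. Until that step is supplied, what you have is a correct outline and a correct identification of the difficulty, but not a proof.
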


\begin{proof}
Notice first that  if we prove the result for an iterate $\hat f^k$ of $\hat f$ then the result will still hold for $\hat f$ since the union of the first $k-1$-iterates of the compact set one finds will give the desired set. 

Consider the map $F$ acting on $\mathbb{H}^2$. As explained in Theorem \ref{t.goodliftsurfacemap} the map $F$ extends to $\cD = \mathbb{H}^2 \cup E$ continuously. The action of $F$ on $E$ has four fixed points, two attracting and two repelling which are alternating. Denote by $\ell_a$ to the geodesic in $\mathbb{H}^2$ joining the points $\ell_a^{\pm}$ in $E$ which are attracting by $F$ and $\ell_r$ to the geodesic joining the respective repelling points which we denote by $\ell_r^{\pm}$. Since the points are alternating, it follows that $\ell_a$ and $\ell_r$ intersect transversally in a point $x_0 \in \mathbb{H}^2$. 

Now, fix $K_0$ given by Fact \ref{f.bdistance} and let $K_1 \gg 10K_0$.  

Let $D_1 \subset \mathbb{H}^2$ be a compact set given by Theorem \ref{t.goodliftsurfacemap} for $F$ and the constant $K_1$. The set $D_1$ verifies the following properties (see figure \ref{f.31}):

\begin{itemize}
\item For every $y \notin D_1$ one has that $d_{\mathbb{H}^2}(y, F(y))> K_1$. 
\item There are open neighborhoods $U^{+}$ and $U^-$ of $\ell_a^+$ and $\ell_a^-$ in $\cD$ respectively such that $F(\overline{U^{\pm}}) \subset U^{\pm}$ so that $d_{\mathbb{H}^2} (\partial U^\pm \cap \mathbb{H}^2, F(U^{\pm})) \gg K_1$ and such that $U^\pm \cap D_1$ is a connected non-empty set. 
\item Symmetrically, there are  open neighborhoods $V^{+}$ and $V^-$ of $\ell_r^+$ and $\ell_r^-$ in $\cD$ respectively such that $F^{-1}(\overline{V^{\pm}}) \subset V^{\pm}$ so that $d_{\mathbb{H}^2} (\partial V^\pm \cap \mathbb{H}^2, F(V^{\pm}) \gg K_1$ and such that $V^\pm \cap D_1$ is a connected non-empty set.
\end{itemize}

Once $D_1$ and the neighborhoods $U^{\pm}$ and $V^{\pm}$ have been chosen, one can consider $k>0$ large enough so that we know that $F^k$ acting on the boundary verifies that the end points of $F^k(V^{\pm})$ are contained in the closure of $U^{-}\cup U^{+}$ in $\cD$.  Once $k$ has been fixed with this property, one can consider new constants $K_0'$ by Fact \ref{f.bdistance} applied to $\hat f^k$ and $K_1' \gg 10K_0'$ and choose $D$ a larger disk (containing $D_1$) so that: 

\begin{itemize}
\item For every $y \notin D$ one has that $d_{\mathbb{H}^2}(y, F^k(y))> K_1'$.
\item $D \cup U^{-} \cup U^+$ contains the $K_1'$ neighborhood of $F(\partial V^{\pm})$ with respect to the hyperbolic distance.
\end{itemize}

\begin{figure}[ht]
\begin{center}
\includegraphics[scale=0.75]{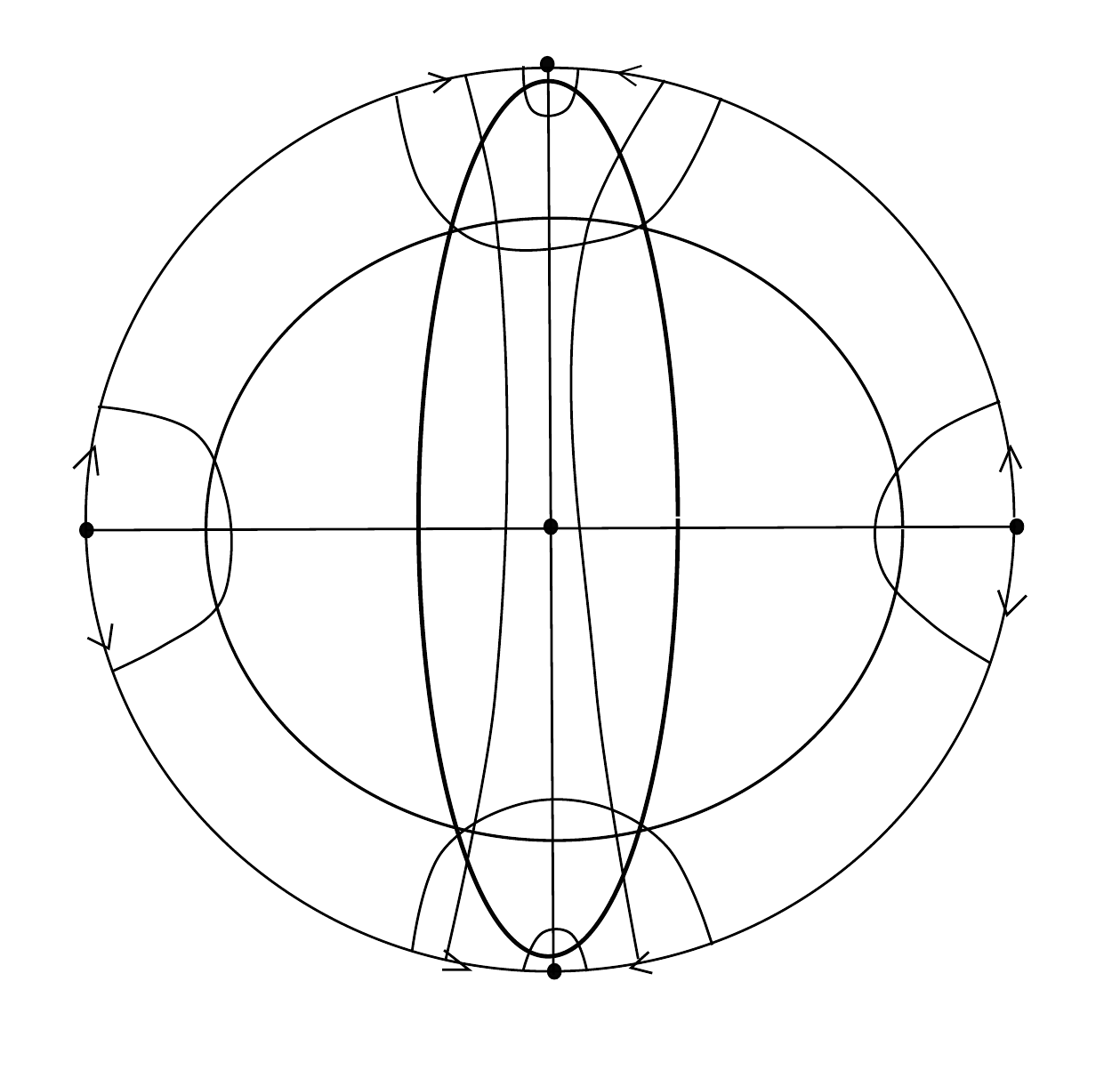}
\begin{picture}(0,0)
\put(-19,122){$\ell_r^+$}
\put(-139,120){$x_0$}
\put(-150, 252){$\ell_a^+$}
\put(-141,9){$\ell_a^-$}
\put(-265,122){$\ell_r^-$}
\put(-80,185){$D$}
\put(-110,165){$F(D)$}
\put(-105,218){$U^+$}
\put(-117,240){$F(V^+)$}
\put(-137,251){$F(U^+)$}
\put(-26, 151){$V^+$}
\end{picture}
\end{center}
\vspace{-0.5cm}
\caption{{\small Depiction of the dynamics of $F^k$.}}\label{f.31}
\end{figure} 

Now, let $\hat D = p^{-1}(D \setminus (V^+ \cup V^-))$. We will show that the compact $\hat f^k$-invariant set 

$$ T_{\hat f^k} = \bigcap_{n \in \ZZ} \hat f^{kn} (\hat D) , $$ 

\noindent is non-empty and intersects every $L \in \hat \cW^{cs}$. This will complete the proof of the proposition as explained above. Let us denote as $\hat D_L$ to $\hat D \cap L$ for every $L \in \hat \cW^{cs}$. Similarly, we denote by $U_L^{\pm}$ to $p^{-1}(U^{\pm} \cap \mathbb{H}^2) \cap L$ and $V_L^{\pm} = p^{-1}(V^{\pm} \cap \mathbb{H}^2) \cap L$. 

The key point is the following. For notational simplicity we will assume from now on that $k=1$ which makes no changes in the argument.

\begin{af} 
For every $R \subset \hat D_L$ which is compact connected and intersects both $U_L^+$ and $U_L^-$, one has that $\hat f(R) \cap \hat D_{\hat f(L)}$ contains a compact connected set which intersects $U_{\hat f(L)}^+$ and $U_{\hat f(L)}^-$ and is disjoint from $V_{\hat f(L)}^{\pm}$. 
\end{af} 

\begin{proof}
This is quite direct from the choice of $\hat D$ and Fact \ref{f.bdistance}. 
\end{proof}

Now, let $R^n_L = \bigcap_{j=0}^n \hat f^j( \hat D_{\hat f^{-j}(L)})$. The previous claim shows that $R^1_L$ contains a compact connected set intersecting both $U_L^+$ and $U_L^-$ for every $L$. We will prove inductively that the same is true for $R^n_L$. So, assume that $R^{n-1}_L$ contains a compact connected set intersecting both $U_L^+$ and $U_L^-$ for every $L \in \hat \cW^{cs}$. Then, the same will be true for $R_L^n= \hat f(R^{n-1}_{\hat f^{-1}(L)}) \cap \hat D_L$ by using the induction hypothesis for $\hat f^{-1}(L)$ and the claim . 

If $R^\infty_L$ is $\bigcap_{n\geq 0} R^n_L$ we deduce that it contains a compact connected set joining $U_L^+$ and $U_L^-$. Similarly\footnote{Clearly, the choice of $k$ could be different for future and backward iterates, but e.g. one can take the minimum common multiple of both as $k$ and continue.}, one can construct a set $Y^\infty_L = \bigcap_{n\leq 0}  \hat f^j( \hat D_{\hat f^{-j}(L)})$ and show it contains a compact connected set joining $V_L^+$ to $V_L^-$. These sets must intersect, and their intersection in $L$ is exactly $T_{\hat f} \cap L$. This completes the proof. 

\end{proof}

There are some simplifications with respect to \cite[Section 8]{BFFP} because on the one hand we do not treat the $p$-prong case (with $p\geq 3$) and because the coordinates given by the Seifert fibration allow to obtain some estimates in a more straightforward fashion. The reader should be able prove the analogous result for the $p$-prong case exactly as in \cite{BFFP} once the analogy exposed in this section is clear. We note that a similar statement follows from \cite{Fathi} but cannot be applied directly since we need that the invariant set intersects every leaf of $\hat \cW^{cs}$. 

\section{Finding periodic center-stable leaves}\label{s.periodiccs}
Using the set $T_{\hat f}$ and the transversal expansion along $\hat \cW^u$ we will show that (compare with \cite[Proposition 9.1]{BFFP}):

\begin{prop}\label{p.periodic} 
There is at least one $L \in \hat \cW^{cs}$ and $n>0$ such that $\hat f^n(L)=L$. 
\end{prop}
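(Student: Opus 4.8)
The plan is to use the compact invariant set $T_{\hat f}$ from Proposition \ref{p.compactset} together with the expansion transverse to $\hat\cW^{cs}$ coming from the unstable foliation $\hat\cW^u$ in order to produce a fixed point of the induced action of $\hat f$ on the leaf space $\cL^{cs}$. The leaf space $\cL^{cs}$ is a topological circle, and $\hat f$ induces a homeomorphism $\hat f_\ast: \cL^{cs}\to\cL^{cs}$. If $\hat f_\ast$ has a periodic orbit we are done, so the heart of the argument is to rule out the possibility that $\hat f_\ast$ is fixed-point free (equivalently, conjugate to an irrational-rotation-like homeomorphism of $S^1$ with no periodic points). Here the compactness of $T_{\hat f}$, which guarantees that there is a point in every center-stable leaf that does not escape to infinity under iteration, is exactly what prevents the ``holonomy'' of the foliation from producing fake expansion (as in the footnote example of the time-one map of the geodesic flow).

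\textbf{Key steps.} First I would record that, since $T_{\hat f}$ is compact, $\hat f$-invariant, and meets every leaf $L\in\hat\cW^{cs}$, its image in $\cL^{cs}$ is a nonempty compact $\hat f_\ast$-invariant subset; in fact it is all of $\cL^{cs}$ if $T_{\hat f}$ meets every leaf. Second, I would use the unstable foliation $\hat\cW^u$: through each point of $T_{\hat f}$ there is an unstable leaf, and $\hat\cW^u$ is transverse to $\hat\cW^{cs}$, so locally $\hat\cW^u$-holonomy gives a well-defined local homeomorphism of $\cL^{cs}$, and by uniform transversality plus the partial hyperbolicity inequality this holonomy is uniformly expanding in a metric on (arcs of) $\cL^{cs}$ induced near $T_{\hat f}$ — this is where one must be careful that the points of $T_{\hat f}$ stay within bounded distance, so that the unstable leaves through them do not wander to infinity and the comparison between the intrinsic metric on an unstable leaf and the transverse distance in $\cL^{cs}$ remains uniform (this is the role played by $T_{\hat f}\subset\hat D$ being a compact piece in each leaf, lying in $p^{-1}(D)$). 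Third, combining invariance of $T_{\hat f}$ with this uniform transverse expansion, I would argue that $\hat f_\ast$ cannot be conjugate to a fixed-point-free circle homeomorphism: a minimal set of such a homeomorphism would be either all of $\cL^{cs}$ or a Cantor set, and in either case the uniform expansion of the induced dynamics on the orbit of a point of $T_{\hat f}$ forces contraction of gaps / accumulation of length, which is incompatible with $\hat f_\ast$ being a homeomorphism of the circle with no periodic points (one gets a contradiction with total length, or with the recurrence forced by minimality). Hence $\hat f_\ast$ has a periodic point, i.e.\ there is $L\in\hat\cW^{cs}$ and $n>0$ with $\hat f^n(L)=L$.

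\textbf{Main obstacle.} The delicate point is making precise the claim that the unstable holonomy induces a \emph{uniformly expanding} map on $\cL^{cs}$ restricted to (a neighborhood of) $T_{\hat f}$. One has to transfer the leafwise/unstable expansion of partial hyperbolicity into expansion of the transverse circle coordinate, and this requires controlling the angle between $E^u$ and $E^{cs}$ (uniformly, by compactness of $M$) \emph{and} controlling that orbits of points of $T_{\hat f}$ do not run off to infinity in $\hat M$, so that the identification $p: L\to\mathbb H^2$ and the transverse structure stay uniformly comparable. This is exactly why Proposition \ref{p.compactset} was needed in the form ``intersects every leaf'': without a compact invariant set meeting a given leaf, the transverse expansion could be an artifact of the foliation's holonomy at infinity rather than genuine dynamical expansion, and the contradiction would evaporate. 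The rest — deducing existence of a periodic point of a circle homeomorphism from uniform expansion along an invariant set — is a soft dynamical argument (no periodic point $\Rightarrow$ unique minimal set $\Rightarrow$ contradiction with uniform expansion), and I expect it to go through essentially as in \cite[Proposition 9.1]{BFFP}.
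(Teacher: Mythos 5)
Your high-level strategy is in the right direction: you correctly identify that compactness of $T_{\hat f}$ (meeting every leaf) is what prevents the transverse dynamics from being an artifact of holonomy at infinity, and that the unstable foliation supplies the transverse mechanism. However, the route you sketch — set up a ``uniformly expanding'' map on the topological circle $\cL^{cs}$ and then rule out irrational-rotation-type dynamics — is more global, more delicate, and you explicitly leave both hard steps open (step two as the ``main obstacle'' and step three as a ``soft dynamical argument'' you ``expect to go through''). Neither step is as soft as you suggest: the leaf space is only a topological circle with no canonical metric, so ``uniform expansion'' on it needs to be built from scratch and controlled as orbits move around; and if the putative minimal set were a Denjoy-type Cantor set of measure zero, a total-length argument does not apply directly, while the expansion estimate would have to be propagated along arcs whose interiors need not meet $T_{\hat f}$ at all.

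The paper's argument is local and sidesteps all of this. It picks a recurrent point $x \in T_{\hat f}$ (from a minimal subset), so $\hat f^{n_j}(x) \to x$. In a single foliated product chart $B \cong D_0 \times (0,1)$ around $x$, with center-stable plaques horizontal, a short unstable arc $I$ through $x$ and a short unstable arc $J$ through $y = \hat f^{n_j}(x)$ each cross every plaque once $n_j$ is large. Because $\hat f^{-1}$ contracts unstable arcs exponentially, $\hat f^{-n_j}(J) \subset I$ for $n_j$ large; the induced map on the plaque parameter $(0,1)$ (unstable holonomy composed with $\hat f^{-n_j}$) is then a continuous self-map whose image is compactly contained in $(0,1)$, so the intermediate value theorem produces a fixed plaque, i.e.\ a center-stable leaf fixed by $\hat f^{n_j}$. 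No global metric on $\cL^{cs}$, no uniformity of transverse expansion, and no classification of fixed-point-free circle homeomorphisms are needed — recurrence plus one product box plus IVT does everything. If you reorganize your sketch around finding a recurrent point in $T_{\hat f}$ and working in a single foliation chart, the precise estimates you were worried about disappear and you recover the paper's proof.
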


\begin{proof}
Consider a point $x \in T_{\hat f}$ such that $\hat f^{n_j}(x) \to x$ for some $n_j \to \infty$. This exists by compactness of $T_{\hat f}$ (indeed there exists a minimal subset $X$ of $T_{\hat f}$ so that every point $x \in X$ has a dense orbit in $X$). 

Now, fix a local product structure box $B$ around $x$ where the foliation $\hat \cW^{cs}$ is of the form $B \cong D_0 \times (0,1)$ by a chart  $\varphi: B \to D_0 \times (0,1)$ where center stable plaques are sent into horizontals. One can choose $B$ so that a small unstable arc $I$ centered at $x$ maps by $\varphi$ to a transversal intersecting every plaque $D_0 \times \{t\}$. For large $n_j$ one has that there is also an unstable arc $J$ through $y=\hat f^{n_j}(x)$ which projects by $\varphi$ to a transversal intersecting every plaque. Since $\hat f^{-n_j}$ contracts exponentially the length of unstable arcs, it follows that for large $n_j$ we can assume that $\hat f^{-n_j}(J) \subset I$. It follows that there is a plaque $ \varphi^{-1}(D_0 \times \{t_0\})$ whose image by $\hat f^{n_j}$ intersects itself. This gives a periodic center stable leaf of period $n_j$. 
\end{proof}

\begin{obs}
Notice that this proof has used the fact that $f$ is dynamically coherent. However, it can be adapted to more general situations, see \cite[Section 11.8]{BFFP}. 
\end{obs} 

\section{Proof of Theorem \ref{teo.main}}\label{s.teoA}
As a consequence of the proof in the previous section we can extend to show the following (compare with \cite[Section 9]{BFFP}): 

\begin{prop}\label{p.transverseexpansion} 
For every $L \in \hat \cW^{cs}$ such that there is $k>0$ with $\hat f^k(L)=L$ it follows that there is a neighborhood $I$ of $L$ in $\cL^{cs}= \hat M/_{\hat \cW^{cs}}$ such that for every $L' \in I$ one has that $\hat f^{kn}(L') \to L$ as $n \to -\infty$. 
\end{prop}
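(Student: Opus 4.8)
The plan is to combine the transverse contraction along the unstable foliation under backward iteration with Proposition~\ref{p.compactset}, which guarantees that the periodic leaf $L$ contains a point whose backward orbit stays in a compact set. The induced action of $\hat f^k$ on the circle $\cL^{cs}$ fixes the point $L$, and we want to show it is attracting for the \emph{inverse} dynamics; the only thing that could spoil this is an orbit escaping to infinity along $L$, where the holonomy of $\hat\cW^{cs}$ is not controlled.

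First I would produce a good point on $L$. Since $\hat f^k(L)=L$ and $T_{\hat f}$ is $\hat f$-invariant (hence $\hat f^{-k}$-invariant), the set $T_{\hat f}\cap L$ is invariant under $\hat f^{-k}$; it is non-empty by Proposition~\ref{p.compactset}, and compact because each leaf of $\hat\cW^{cs}$ is a properly embedded plane (a graph over $\mathbb{H}^2$) and hence closed in $\hat M$. Fix $x\in T_{\hat f}\cap L$; then $\hat f^{-kn}(x)\in T_{\hat f}\cap L$ for all $n\geq 0$, so the whole backward orbit of $x$ under $\hat f^{k}$ lies in this fixed compact subset of $L$. Because $\hat\cW^u$ is one-dimensional and everywhere transverse to the codimension-one foliation $\hat\cW^{cs}$, a short unstable arc $\sigma$ centered at $x$ is mapped by the quotient projection $\pi\colon\hat M\to\cL^{cs}$ homeomorphically onto an open interval of $\cL^{cs}$ containing $L=\pi(x)$ in its interior; take $I:=\pi(\sigma)$.

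Now let $L'\in I$ and write $L'=\pi(x')$ with $x'\in\sigma$. For $n\geq 0$ the point $\hat f^{-kn}(x')$ lies on the unstable arc $\sigma_n:=\hat f^{-kn}(\sigma)$, which also contains $z_n:=\hat f^{-kn}(x)\in T_{\hat f}\cap L$. By partial hyperbolicity (equivalently, the backward contraction of $\hat\cW^u$ in the Stable Manifold Theorem) the length of $\sigma_n$ tends to $0$ exponentially, so the diameter of $\sigma_n$ in $\hat M$ tends to $0$ while $z_n$ remains in the compact set $T_{\hat f}\cap L$; in particular $\sigma_n$ is eventually contained in a fixed compact neighborhood $N$ of $T_{\hat f}\cap L$. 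Since $\pi$ is uniformly continuous on $N$ and $\pi(z_n)=L$ for every $n$, the image $\pi(\sigma_n)$ is contained in a ball around $L$ in $\cL^{cs}$ whose radius tends to $0$. As $\hat f^{-kn}(L')=\pi(\hat f^{-kn}(x'))\in\pi(\sigma_n)$, we conclude that $\hat f^{-kn}(L')\to L$, i.e. $\hat f^{kn}(L')\to L$ as $n\to-\infty$.

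The main obstacle is precisely the last step: a priori an unstable arc through a point of $L$ that is short in $\hat M$ need not have a short image in $\cL^{cs}$, since the holonomy of $\hat\cW^{cs}$ can be badly behaved far out along the leaf. This is exactly the role of Proposition~\ref{p.compactset}: it confines the iterates $\hat f^{-kn}(x)$ to a compact part of $L$, on which $\pi$ restricted to short transversals has a uniform modulus of continuity. The remaining ingredients — transversality of $\hat\cW^u$ to $\hat\cW^{cs}$ and the uniform exponential backward contraction along $\hat\cW^u$ — are standard, and the argument is parallel to (and simpler than) the one in \cite[Section~9]{BFFP}.
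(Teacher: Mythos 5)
Your proof is correct, but takes a genuinely different route from the paper's. The paper first invokes a Lefschetz index argument to produce an honest \emph{fixed} point of $\hat f^k$ in $T_{\hat f}\cap L$, and then reads the transverse attraction off the local unstable expansion in a foliated chart around that fixed point, exactly as in Proposition~\ref{p.periodic}. You sidestep the Lefschetz step: you pick an arbitrary point $x\in T_{\hat f}\cap L$ and control its whole backward $\hat f^k$-orbit by noting that it stays in the compact invariant set $T_{\hat f}\cap L$ (compactness follows, as you say, because each leaf of $\hat\cW^{cs}$ is a closed graph over $\mathbb{H}^2$ in $\hat M\cong\mathbb{H}^2\times S^1$). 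Combining this with the uniform backward contraction of unstable arcs and uniform continuity of the quotient map $\pi$ on a compact neighborhood of $T_{\hat f}\cap L$ then yields the convergence in $\cL^{cs}$. Your version is more self-contained (no Lefschetz index reference is needed) and is structurally parallel to the proof of Proposition~\ref{p.periodic}; what the paper's version buys is the existence of an actual fixed point of $\hat f^k$ on $L$, a stronger structural fact that is convenient to have on hand for arguments like the one around Proposition~\ref{p.landingbound}. Both arguments depend crucially on dynamical coherence, as the remark following the proposition stresses.
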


\begin{proof}
By a Lefschetz index argument (see eg. \cite[Appendix I]{BFFP}) one can see that there must be a fixed point $x \in T_{\hat f} \cap L$ by $\hat f^k$. Now, the result follows as in the proof of Proposition \ref{p.periodic} by considering a foliated chart around $x$ and using that $\hat f^k$ is expanding in the local unstable manifold of $x$. 
\end{proof}

\begin{obs}
This proof does use dynamical coherence crucially, as collapsing of center stable leaves can make the transverse behaviour change, e.g. if there is an interval of leaves collapsing at the fixed point $x \in L$. See section \ref{s.further} for more details. 
\end{obs}

This allows to complete the proof of Theorem \ref{teo.main}. 

\begin{proof}[Proof of Theorem \ref{teo.main}]
The proof is by contradiction. We have assumed that $f$ was dynamically coherent and constructed $\hat f$ acting on $\cL^{cs}$ as a circle homeomorphism. In Proposition \ref{p.periodic} we showed that $\hat f$ acting in $\cL^{cs}$ has periodic points. Proposition \ref{p.transverseexpansion} shows that \emph{every} periodic point of $\hat f$ in $\cL^{cs}$ is expanding. This is impossible, and gives a contradiction that completes the proof. 
\end{proof}

\section{Further properties and discussions}\label{s.further} 
The classification of partially hyperbolic diffeomorphisms in dimension 3 was started in the seminal paper of Bonatti and Wilkinson \cite{BW}. The fundamental results of Brin, Burago and Ivanov (see \cite{BI}) provided the main tool for the study of these systems by showing that even when a partially hyperbolic diffeomorphism is not dynamically coherent, it nevertheless (up to a finite lift and iterate) preserves a pair of invariant geometric structures tangent to $E^{cs}$ and $E^{cu}$ which can be sometimes treated as foliations. These objects are called \emph{branching foliations} and we decided not to introduce them in this short paper. We refer the reader to \cite{BI} or \cite{HP-survey} for an overview. 

In the same way as in \cite[Part 2]{BFFP} one can obtain many of the results that we stated without the assumption of dynamical coherence. The main difference is that one will not be able to get a contradiction (which is good, since there are examples!). One important consequence is that one can obtain an analog of Proposition \ref{p.compactset}.  Its proof follows the same scheme using branching foliations instead of foliations. We will state it without proof trusting that the interested reader can deduce it by looking at \cite[Part 2]{BFFP}:

\begin{prop}\label{p.compactset2}
Let $f: M \to M$ be a partially hyperbolic diffeomorphism of a closed circle bundle over a higher genus surface and with $f$ acting on the base with a pseudo-Anosov piece. Consider a lift $\hat f$ to $\hat M$ associated to a lift of the associated map $h: S\to S$ having $\geq 4$ periodic points alternatingly contracting and expanding at infinity.  Denote by $p: \hat M \to \tilde S$ the projection along the fibers. Then, there is a large disk $D$ in $\tilde S$ such that if $\hat D = p^{-1}(D)$ then we have that the set $T_\gamma= \bigcap_{n \in \ZZ} \hat f^n(\hat D)$ is non empty and essential in the sense that every non homotopically trivial curve in $\partial \hat D$ is homotopically non-trivial in $\hat M \setminus T_\gamma$.  
\end{prop}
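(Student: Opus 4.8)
The plan is to imitate the proof of Proposition~\ref{p.compactset}, keeping track of how the construction interacts with the fiber direction so that the resulting compact invariant set is essential, and then transfer the essentiality from $\hat M$ to statements about curves on $\partial\hat D$. As in the dynamically coherent case, one first replaces $\hat f$ by a suitable iterate $\hat f^k$ (harmless, since one can take a union of iterates and essentiality is preserved under finite unions). Using Theorem~\ref{t.goodliftsurfacemap}, fix the geodesics $\ell_a$ and $\ell_r$ through the attracting and repelling fixed points of $F$ at infinity, choose the compact disk $D_1\subset\tilde S$ together with the neighborhoods $U^\pm$ of $\ell_a^\pm$ and $V^\pm$ of $\ell_r^\pm$, and pass to $\hat D = p^{-1}\bigl(D\setminus(V^+\cup V^-)\bigr)$ exactly as before, so that the Claim in the proof of Proposition~\ref{p.compactset} goes through verbatim — here one uses the analogue of Fact~\ref{f.bdistance} for the branching foliation, which is still available because the center-stable branching foliation of a circle bundle is horizontal by \cite{HaPS} and the projection $p$ along fibers is a leafwise isometry. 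This produces $T_\gamma = \bigcap_{n\in\ZZ}\hat f^{kn}(\hat D)$ and shows it is nonempty and that it meets every ``leaf'' $p^{-1}(x)$, via the usual argument that the nested compact connected sets $R^n_L$ joining $U^+_L$ to $U^-_L$ have nonempty intersection, which must then meet the symmetric family $Y^\infty_L$ joining $V^+_L$ to $V^-_L$.

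The new point is essentiality. Concretely, I would fix a non-homotopically-trivial curve $c$ in $\partial\hat D$ and show that $c$ cannot be capped off in $\hat M\setminus T_\gamma$. The boundary $\partial\hat D$ consists of pieces of $p^{-1}(\partial D)$ together with pieces of $p^{-1}(\partial V^\pm)$; after finite cover $\hat M\cong\mathbb H^2\times S^1$ and $p$ is the first projection, so $\pi_1(\hat M)\cong\ZZ$ is generated by the fiber class $\delta$. A curve in $\partial\hat D$ is non-trivial in $\hat M$ precisely when it wraps around the $S^1$-factor, i.e.\ projects to a point of $\tilde S$ while going around a fiber, or more generally has nonzero $\delta$-degree. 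The argument is that $T_\gamma$ meets every fiber $p^{-1}(x)$ for $x$ in a set that separates $\partial D$ appropriately inside $D$: since for each such $x$ the fiber $p^{-1}(x)$ contains a point of $T_\gamma$, any disk in $\hat M\setminus T_\gamma$ bounded by a curve of nonzero $\delta$-degree would have to project to a map $D^2\to\tilde S$ whose image avoids the projection-image of $T_\gamma$; but the nested-connected-sets construction shows the projection of $T_\gamma$, together with the geometry of the $U^\pm,V^\pm$, blocks such a capping — one uses that $T_\gamma\cap p^{-1}(x)$ is nonempty for all $x$ in the relevant region to intersect any such spanning disk. I would phrase this as: if $c$ bounded a disk $\Delta$ in $\hat M\setminus T_\gamma$, then since $\hat f$-invariance is not needed here, project $\Delta$ and derive that its boundary has zero $\delta$-degree, contradicting non-triviality; the degree computation uses that $p|_{T_\gamma}$ surjects onto $\interior D$ (or a large enough subdisk), so $p^{-1}(p(\Delta))$ necessarily meets $T_\gamma$ unless $p(\Delta)$ degenerates.

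The main obstacle I anticipate is exactly this last topological step: converting ``$T_\gamma$ intersects every fiber over $D$'' into ``$\partial\hat D$-curves are essential in the complement,'' because one must be careful about which curves in $\partial\hat D$ are being considered (those on the $p^{-1}(\partial D)$ part versus those on the $p^{-1}(\partial V^\pm)$ part may behave differently) and about whether ``intersects every leaf/fiber'' is enough, or whether one needs a mild connectivity or separation statement about $T_\gamma\cap p^{-1}(x)$. I expect the cleanest route is to argue directly that the inclusion $\hat M\setminus T_\gamma \hookrightarrow \hat M$ kills no nonzero power of $\delta$: since $T_\gamma$ meets every fiber, a loop freely homotopic to $\delta^m$ ($m\neq0$) cannot be contracted in the complement because a contracting disk would project, by general position, to a singular disk in $\tilde S$ whose preimage meets the ``vertical'' set $T_\gamma$, forcing an intersection point with $T_\gamma$ on the disk. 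Making ``general position'' and ``forcing an intersection'' precise — essentially an intersection-number/degree argument in $\hat M$ relative to $T_\gamma$ — is where the real work lies; everything before that is a routine transcription of Sections~\ref{s.compactset} into the branching-foliation language of \cite[Part~2]{BFFP}, which is why the proposition is stated without proof in the paper.
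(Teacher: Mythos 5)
Your transcription of the first half---building $T_\gamma$ via Theorem~\ref{t.goodliftsurfacemap}, the disk $D$ and the nested $R^n_L$, replacing $\hat\cW^{cs}$ by the branching center-stable foliation of \cite{BI} and invoking horizontality from \cite{HaPS}---is the right idea and matches what the paper has in mind when it defers to \cite[Part~2]{BFFP}. The essentiality half, however, rests on a misreading that empties the proposition of content. You take ``non homotopically trivial curve in $\partial\hat D$'' to mean non-trivial in $\hat M$, i.e.\ nonzero $\delta$-degree; but a loop already non-trivial in $\hat M$ is automatically non-trivial in the subspace $\hat M\setminus T_\gamma$, so under this reading the claim is vacuous and $T_\gamma$ plays no role. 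The intended reading is non-trivial in $\partial\hat D\cong T^2$. Since $\pi_1(\partial\hat D)\to\pi_1(\hat M)$ has kernel generated by the meridian $\mu=\partial D\times\{\ast\}$, the actual content is precisely that $\mu$ and its powers do not bound disks in $\hat M\setminus T_\gamma$, even though $\mu$ bounds $D\times\{\ast\}$ in $\hat D$. Your sketch never touches this case; the ``kills no nonzero power of $\delta$'' argument you propose at the end is again automatic and not what is being asked.

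A related gap is the assertion that ``$T_\gamma$ meets every fiber $p^{-1}(x)$.'' The nested-continuum argument of Proposition~\ref{p.compactset} yields intersection with every \emph{leaf} $L$ of the branching foliation; the point $T_\gamma\cap L\subset R^\infty_L\cap Y^\infty_L$ can project anywhere in $D$, so nothing is said about individual fibers. And even if one somehow knew $p(T_\gamma)=D$, this would still not give essentiality: the graph of a continuous section $D\to S^1$ meets every fiber, yet its complement in $\hat D\cong D\times S^1$ is contractible, so the meridian would bound there. What the construction actually provides, and what a proof must exploit, is that the leafwise separating continua $R^\infty_L$ and $Y^\infty_L$, swept over the circle leaf space $\cL^{cs}$, produce $\hat f$-invariant closed subsets of $\hat D$ that separate $\hat D$ and wind once around the fiber direction as $L$ runs around $\cL^{cs}$; it is this winding, not a fiber-by-fiber intersection count, that links $T_\gamma$ with $\mu$ and blocks a capping disk. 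Making that precise---for instance by showing the fiber class survives in $\check{H}^1(T_\gamma;\ZZ)$, or by the separation argument of \cite[Part~2]{BFFP}---is the missing step, and you correctly flag that this is where the real work lies, but the intersection-number sketch you offer is aimed at the wrong homotopy classes.
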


We comment also on some properties that can be obtained if one finds some surfaces periodic by $f$ and tangent to $E^{cs}$. A more detailed treatment in a similar situation appears in \cite[Sections 14]{BFFP}. 

To state a result in this direction, we must change slightly the context we have been working with. Let $f: M \to M$ be a partially hyperbolic diffeomorphism of a circle bundle over a surface $S$. As before, $f$ induces a map $h: S \to S$ which we assume has a pseudo-Anosov component. We consider a lift $\hat f$ to $\hat M$ induced by a lift $\tilde h$ of $h$ as given by Theorem \ref{t.goodliftsurfacemap}. We will say that a ray $\eta: [0,\infty) \to \hat M$  is \emph{contracting fixed ray} if $\hat f (\mathrm{Im}(\eta)) = \mathrm{Im}(\eta)$, the unique fixed point of $\hat f$ in $\mathrm{Im}(\eta)$ is $\eta(0)$ and every other point $y \in \mathrm{Im}(\eta)$ verifies that $\hat f^n (y) \to \eta(0)$. 

\begin{prop}\label{p.landingbound}
Assume that $\hat f$ preserves a complete surface $\cS$ uniformly transverse\footnote{To be precise, what we need is that the metric induced by restricting the metric of $\hat M$ to $\cS$ is quasi-isometric to the metric pulled back by the projection to $\mathbb{H}^2$. Moreover, by inspection of the proof it is only needed that the surface is transverse to the fibers in a neighborhood of the ray.} to the fibers of $p: \hat M \cong \mathbb{H}^2 \times S^1 \to \mathbb{H}^2$ and contains a contracting fixed ray. Then, the projection of $\eta$ to $\mathbb{H}^2$ can be extended continuously to a closed arc $\overline \eta: [0, \infty] \to \mathbb{H}^2 \cup E$ and $\overline \eta(\infty)$ is one of the repelling points at infinity. 
\end{prop}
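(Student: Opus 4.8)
The plan is to track the ray $\eta$ under forward iteration of $\hat f$ and use Fact \ref{f.bdistance} together with the ``super-repelling'' behaviour at infinity (Theorem \ref{t.goodliftsurfacemap}) to show that the projection $p\circ\eta$ cannot accumulate anywhere in $\mathbb{H}^2$ nor at any non-repelling boundary point. First I would set $\gamma = p\circ\eta : [0,\infty) \to \mathbb{H}^2$ and observe that, since $\cS$ is uniformly transverse to the fibers, the restriction of $p$ to $\cS$ is a quasi-isometry onto $\mathbb{H}^2$; in particular $\gamma$ is a proper map (it leaves every compact set), so any limit of $\gamma(t)$ as $t\to\infty$ lies in $E=\partial_\infty\mathbb{H}^2$, and it suffices to show the limit set of $\gamma$ is a single repelling fixed point of $F$. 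Because $\hat f$ fixes $\mathrm{Im}(\eta)$ with only $\eta(0)$ fixed and $\hat f^n(y)\to\eta(0)$ for every $y$ on the ray, the map $F=p\circ\hat f\circ (p|_\cS)^{-1}$ (up to bounded error, by Fact \ref{f.bdistance}) maps $\gamma$ into itself, shifting points ``backward'' toward $\gamma(0)=p(\eta(0))$; equivalently, $F^{-1}$ pushes points along $\gamma$ out toward infinity.

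The key step is to locate $\gamma(0)$ and the tail of $\gamma$ relative to the four fixed points of $F$ on $E$. Let $\ell_r^\pm$ be the repelling fixed points and $\ell_a^\pm$ the attracting ones, alternating on $E$. Apply item ($i$)($b$) of Theorem \ref{t.goodliftsurfacemap} to $F^{-1}$: for any $K>0$ there is a compact $D\subset\mathbb{H}^2$ with $d_{\mathbb{H}^2}(y,F^{-1}(y))>K$ for all $y\notin D$. Since $\hat f^{-n}$ moves points of $\eta$ monotonically out to infinity and $p|_\cS$ is a quasi-isometry, the hyperbolic displacement $d_{\mathbb{H}^2}(\gamma(t),F^{-1}(\gamma(t)))$ is comparable (up to the quasi-isometry constants and $K_0$ from Fact \ref{f.bdistance}) to the $\cS$-distance travelled along $\eta$ between consecutive preimages, which is bounded below but we also need an upper bound — here I would instead argue that the $\omega$-limit of $\gamma$ in $\cD$ must be $F^{-1}$-invariant and connected, hence contained in the fixed-point set of $F|_E$ union possibly arcs between them; a standard no-wandering argument (the ray is forward-$F$-invariant up to bounded error, so its tail cannot cross a fundamental domain for the ``source-sink'' dynamics of $F^{-1}$ on $E$) forces the limit set to be a single fixed point. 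To exclude the attracting points $\ell_a^\pm$, use item ($i$)($a$): near $\ell_a^\pm$ there are neighborhoods $U_n$ with $d_{\mathbb{H}^2}(\partial U_n, \tilde h(U_n))\to\infty$, i.e. $F$ super-contracts a neighborhood of $\ell_a^\pm$ into itself, so $F^{-1}$ super-expands — if the tail of $\gamma$ entered such a $U_n$, then $F^{-1}$ applied to it would immediately exit with displacement exceeding the bounded tracking error from Fact \ref{f.bdistance}, contradicting that $F^{-1}$ moves $\gamma$ along itself by a controlled amount comparable to arclength on $\cS$. Hence the only possible limit points are $\ell_r^+$ or $\ell_r^-$, and connectedness of the limit set gives a single one; this also yields the continuous extension $\overline\eta:[0,\infty]\to\mathbb{H}^2\cup E$ with $\overline\eta(\infty)=\ell_r^{\pm}$.

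The main obstacle I anticipate is making the ``$F^{-1}$ moves $\gamma$ along itself by a controlled amount'' precise: Fact \ref{f.bdistance} compares $d_{\mathbb{H}^2}(p(x),F(p(x)))$ with a leafwise distance in $\hat\cW^{cs}$, but here the relevant object is the surface $\cS$, not a center-stable leaf. This is exactly why the hypothesis requires $\cS$ to be quasi-isometrically embedded via $p$ (and transverse to the fibers near the ray); I would need to replace the exact isometry $p|_L$ used in Fact \ref{f.bdistance} by the quasi-isometry $p|_\cS$, absorbing the distortion constants into the error term, and then carefully chain the estimates so that the super-expansion near $\ell_a^\pm$ (where the displacement blows up like the ``depth'' in $U_n$) beats the quasi-isometric distortion of one step of $\hat f$ along $\cS$. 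Once that bookkeeping is set up, the topological argument on $\cD$ — a proper forward-invariant ray whose boundary limit set is invariant, connected, and avoids the two super-attracting points — pins the landing point down to one repelling fixed point.
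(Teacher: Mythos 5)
Your proposal follows the same overall route as the paper: project the ray, pass to the limit set at infinity, use invariance and connectedness, and exclude the attracting boundary fixed points. You also rightly emphasize that properness of $p\circ\eta$ (coming from the quasi-isometry hypothesis on $\cS$) is what puts the limit set $I$ inside $E$; the paper leaves that implicit but it is indeed the reason $I=\overline{p(\eta)}\cap E$ is nonempty and, as a nested intersection of compact connected sets, a continuum.

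The step that does not hold up as written is the exclusion of $\ell_a^{\pm}$ via $F^{-1}$. You argue that if the tail of $\gamma$ entered a small $U_n$ around $\ell_a$, then ``$F^{-1}$ applied to it would immediately exit with displacement exceeding the bounded tracking error,'' contradicting that $F^{-1}$ moves $\gamma$ roughly along itself. But there is no contradiction here: the super-expansion of $F^{-1}$ is concentrated near $\partial U_n$, while the points of $\gamma$ that approach $\ell_a$ lie arbitrarily deep inside $U_n$, where $F^{-1}$ can perfectly well keep them in $U_n$; moreover ``stays close to $\gamma$'' is no constraint on the size of the displacement, since $\gamma$ is unbounded and $\hat f^{-1}$ does move ray points by arbitrarily large amounts as one goes out along $\eta$. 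The correct contradiction runs forward, using the hypothesis that $\eta$ is a contracting fixed ray: if some $p(\eta(t_0))\in U$, where $U$ is chosen with $\tilde h(\overline U)\subset U$ and margin $>K_0$ (so that the $\cS$-tracked map $q=p\circ\hat f\circ(p|_\cS)^{-1}$, which is $K_0$-close to $\tilde h$ by the analogue of Fact~\ref{f.bdistance}, also satisfies $q(\overline U\cap\mathbb{H}^2)\subset U$), then $q^n(p(\eta(t_0)))=p(\hat f^n(\eta(t_0)))$ stays in $U$ for all $n$; yet $\hat f^n(\eta(t_0))\to\eta(0)$, so these projections converge to $p(\eta(0))$, which one may arrange to lie outside $\overline U$. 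This forward argument is essentially what the paper does, and it simultaneously makes your ``no-wandering'' appeal unnecessary: once $I$ is a connected $\tilde h$-invariant subset of $E$ disjoint from both attracting points, it lies in one of the two arcs of $E\setminus\{\ell_a^+,\ell_a^-\}$ and must be the single repelling fixed point there.

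Your concern about replacing the exact isometry $p|_L$ in Fact~\ref{f.bdistance} by the quasi-isometry $p|_\cS$ is legitimate but minor: it only changes $K_0$ to a larger constant $K_0'$, and the margin $K_1$ in the choice of $U$ is then taken relative to $K_0'$.
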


\begin{proof} Let $\mathcal{I}$ be the closure of $p(\eta([0,\infty))$ in $\mathbb{H}^2 \cup E$ and $I = \mathcal{I} \cap E$ which is a connected subset of $I$.  Clearly, $\tilde h(I)=I$ because $\eta$ is $\hat f$ invariant. 

We claim that $I$ cannot contain an attracting point $a$ of $\tilde h$ acting on $E$. If that were the case, one can consider an open set $U$ of $a$ in $\mathbb{H}^2 \cup E$ such that $\tilde h(\overline U) \subset U$. Because of the properties in Theorem \ref{t.goodliftsurfacemap} one can choose $U$ so that points of $\partial U \cap \mathbb{H}^2$ are mapped by $\tilde h$ with distance larger than $K_0$ which is much larger than the distance between $\tilde h$ and the induced dynamics of $\hat f$ on $\mathbb{H}^2$ via the projection of its dynamics on $\cS$ (see Fact \ref{f.bdistance}). This implies that $p(\eta([0,\infty))$ cannot intersect  $U$ because of its contracting behaviour by $\hat f$. 

It follows that $I$ must be a connected subset of $E$ avoiding neighborhoods of each attracting point. This implies that $I$ is a repelling point of $\tilde h$ in $E$. 
\end{proof}

Notice that this proof uses crucially that one can extend the dynamics in the boundary to some information in the interior of $\mathbb{H}^2$. This is not the case in general, in particular when $f$ has no pseudo-Anosov components, or when one chooses lifts of maps which are not pseudo-Anosov (see appendix \ref{appendixA} for more details on this).  Notice that on the other hand, in \cite{BFFP} the case of maps homotopic to the identity was successfully treated in Seifert manifolds. 
\appendix

\section{Lifts of surface maps}\label{appendixA}
In this appendix we study the dynamics of lifts of surface maps to the universal cover. Most of these results follow directly from the Nielsen-Thurston classification theory and are implicit in the standard references \cite{Thurston-surfaces, FLP, HT, Casson,Calegari}. Since we were not able to find the precise statements we will give an indication of the proofs assuming some familiarity with the theory of geodesic laminations. See also \cite{Gilman,Miller} for more details on the approach by Nielsen which is more or less what we follow here, based on \cite{HT}. For completeness, we have also treated the Dehn twist case which is not used in this paper. 

Similar results in a strictly more general setting of flows transverse and regulating to $\R$-covered foliations were obtained in \cite{Thurston,CalegariPA,Fen2002}, but only fully developed in the case of atoroidal manifolds which correspond to the pseudo-Anosov case. 

\subsection{Preliminaries}
Let $S$ be a closed orientable surface of genus $g\geq 2$ and let $f: S \to S$ a homeomorphism. We fix a hyperbolic metric on $S$ and identify the universal cover  $\tilde S$ of $S$ with $\mathbb{H}^2$. The deck transformation group $\Gamma = \pi_1(S)$ acts by isometries of $\mathbb{H}^2$. 

We denote by $E = \partial \tilde S = \partial_{\infty} \mathbb{H}^2 = \partial_\infty \Gamma$ the circle at infinity (or Gromov boundary) of $\tilde S$, and $\cD = \tilde S \cup E$ with the topology given by the Gromov compactification.  

A lift $g$ of $f$ to the universal cover $\tilde S$ of $S$ has a unique continuous extension to $\cD$  (see e.g. \cite[Corollary 1.2]{HT}), still denoted by $g: \cD \to \cD$ . This restricts to a homeomorphism $\hat g= g|_{E} : E \to E$ of the boundary circle. Two lifts of $f$ to the universal cover differ by a deck transformation and the dynamics of different lifts may vary. However, the action at infinity only depends on the homotopy class of the lift. 

\begin{fact}[see e.g. Corollary 1.4 in \cite{HT}] 
If two homeomorphisms $f,f'$ in a surface have lifts $g,g'$ so that $\hat g = g|_{E}= g'|_{E}=\hat g'$ then $f$ is homotopic to $f'$. Conversely, if two maps are homotopic and one considers a lift of the homotopy, then the action in $E$ is constant along the homotopy. 
\end{fact}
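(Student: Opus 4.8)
The plan is to reduce both halves of the Fact to a single elementary principle: \emph{two self-maps of $\mathbb{H}^2$ that extend continuously to $\cD$ and stay at bounded distance from one another induce the same homeomorphism of $E$.} To prove this principle I would fix $\xi\in E$, take $x_n\to\xi$ in $\cD$, note that $g_0(x_n)\to\hat g_0(\xi)$, and observe that since $d_{\mathbb{H}^2}(g_0(x_n),g_1(x_n))$ stays bounded while $g_0(x_n)$ leaves every compact set, the sequence $g_1(x_n)$ must converge to the same ideal point; hence $\hat g_1(\xi)=\hat g_0(\xi)$.

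With that in hand the converse direction is immediate. Given a homotopy $H\colon S\times[0,1]\to S$ and a lift $\widetilde H\colon \mathbb{H}^2\times[0,1]\to\mathbb{H}^2$, a routine compactness/uniform-continuity argument — cut $[0,1]$ into finitely many pieces short enough that each homotopy track downstairs stays inside a ball whose radius is below the injectivity radius of $S$, estimate the lift of each short piece inside an evenly covered ball, and sum — yields a uniform bound $\sup_{\tilde x}d_{\mathbb{H}^2}(\widetilde H_s(\tilde x),\widetilde H_t(\tilde x))<\infty$. By the principle above the boundary extension of $\widetilde H_t$ is independent of $t$, which is exactly the assertion that the action on $E$ is constant along the homotopy.

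For the direct direction, suppose $g,g'$ are lifts of $f,f'$ with $\hat g=\hat g'$. First I would extract that the two automorphisms $\phi,\phi'$ of $\Gamma=\pi_1(S)$ determined by $g\gamma=\phi(\gamma)g$ and $g'\gamma=\phi'(\gamma)g'$ coincide: passing these relations to $\cD$ and restricting to $E$ gives $\widehat{\phi(\gamma)}\,\hat g=\widehat{\phi'(\gamma)}\,\hat g$, so $\widehat{\phi(\gamma)}=\widehat{\phi'(\gamma)}$ since $\hat g$ is a homeomorphism of $E$; and since a nontrivial element of $\Gamma$ acts as a hyperbolic translation with exactly two fixed points on $E$, the map $\Gamma\to\mathrm{Homeo}(E)$ is injective, forcing $\phi=\phi'$. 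Then I would build the geodesic straight-line homotopy $\widetilde H_t(\tilde x)$, the point dividing the geodesic segment $[g(\tilde x),g'(\tilde x)]$ in ratio $t:(1-t)$; this is continuous in $(\tilde x,t)$ because geodesic segments of $\mathbb{H}^2$ depend continuously on their (possibly coincident) endpoints. Using that every $\phi(\gamma)$ is an isometry one checks $\widetilde H_t(\gamma\tilde x)=\phi(\gamma)\widetilde H_t(\tilde x)$, so each $\widetilde H_t$ is $\phi$-equivariant and descends to a map $H_t\colon S\to S$; the family $(H_t)$ is then a homotopy from $H_0=f$ to $H_1=f'$.

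I do not expect a serious obstacle: the continuous extension of a lift to $\cD$ is already recorded (the excerpt cites \cite{HT}), the uniform displacement bound for a lifted homotopy is standard, and the equivariance of the geodesic homotopy is a one-line check. The only point worth stating cleanly is the organizing equivalence that powers everything: for appropriately matched lifts, ``$f$ homotopic to $f'$'' $\Longleftrightarrow$ ``$g$ and $g'$ at bounded distance'' $\Longleftrightarrow$ ``$\hat g=\hat g'$''.
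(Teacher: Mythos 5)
Your proof is correct, and it is essentially the standard Nielsen--Handel--Thurston argument; the paper itself gives no proof but simply cites \cite[Corollary 1.4]{HT}, which proceeds along the same lines (bounded displacement $\Leftrightarrow$ same boundary trace $\Leftrightarrow$ homotopic, with the geodesic straight-line homotopy providing the last implication). The organizing principle you isolate --- bounded hyperbolic distance between two continuously extendable maps forces equal boundary extensions --- is exactly the right engine, and your derivation of $\phi=\phi'$ from $\hat g=\hat g'$ (via conjugation on $E$ and injectivity of $\Gamma\to\mathrm{Homeo}(E)$, since nontrivial deck transformations are hyperbolic) is the key lemma that makes the geodesic homotopy equivariant and hence descend.

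One small remark: in the converse direction the subdivision-and-sum argument for the uniform bound $\sup_{\tilde x}d_{\mathbb{H}^2}(\widetilde H_s(\tilde x),\widetilde H_t(\tilde x))<\infty$ works, but it is more efficient to observe that the automorphism $\phi$ associated to $\widetilde H_t$ is independent of $t$ (it varies continuously in the discrete group $\mathrm{Aut}(\Gamma)$), so $(\tilde x,t)\mapsto d_{\mathbb{H}^2}(\widetilde H_0(\tilde x),\widetilde H_t(\tilde x))$ is $\Gamma$-invariant and continuous, hence descends to a continuous function on the compact space $S\times[0,1]$ and is automatically bounded. This avoids any appeal to the injectivity radius and parallels the equivariance computation you already do in the direct direction.
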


For an element $\gamma \in \Gamma$ we denote by $A(\gamma)$  the \emph{axis} of $\gamma$, that is, the unique  geodesic in $\mathbb{H}^2$ fixed by $\gamma$ oriented from the repelling fixed point $\gamma^-$ in $E$ to the attracting fixed point $\gamma^+$ in $E$.  In general, if $s$ is an oriented geodesic in $\mathbb{H}^2$, we denote by $s^-$ and $s^+$ the forward and backward endpoints of the geodesic in $E$. By a \emph{geodesic} in $D$ we mean the closure of a geodesic in $\mathbb{H}^2$ which we denote by  $\hat s = s \cup s^- \cup s^+$. 

Given $\hat s$ an oriented geodesic in $\cD$ we denote by $g_\ast \hat s$ to the (oriented) geodesic joining the extreme points of $\hat g(s^-)$ with $\hat g(s^+)$.  

For $\gamma \in \Gamma$ we denote by $C(\gamma)$  the geodesic in $S$ which is the projection of $A(\gamma)$. Notice that the lift of $C(\gamma)$ to the universal cover consists on all the geodesics $A(\eta)$ where $\eta$ is in the conjugacy class of $\gamma$. For a closed geodesic $c$ in $S$,  we denote as $f_\ast(c)$ the unique closed geodesic homotopic to $f(c)$, or equivalently, to the projection of $g_\ast(\tilde c)$ where $\tilde c$ is any geodesic which projects onto $c$. 

It follows from the Nielsen-Thurston classification that every surface homeomorphism is either periodic, reducible or pseudo-Anosov. This means $f$ is homotopic to a map $f_0$ that preserves a collection of simple closed geodesics $\{c_1, \ldots, c_d \}$. Note that there is a power $k>0$ such that $f_0^k(c_i)=c_i$ for all $i$. After cutting $S$ along the curves in the collection one has a collection $S_1, \ldots, S_\ell$ of subsurfaces fixed by $f_0^k$. Up to considering a larger $k$ there are two possibilites for $f_0^k$ acting on each  $S_i$ (c.f. \cite[Section 2]{HT}):

\begin{itemize}
\item $f_0^k$ induces the identity in the fundamental group of $S_i$ or, 
\item $f_0^k$ does not fix any simple closed geodesic in the interior of $S_i$. 
\end{itemize}

We will say that $f$ has a \emph{pseudo-Anosov piece} if $f_0^k$ acts as in the second possibility in some subsurface $S_i$ of the decomposition. In this second case, we call $S_i$ a pseudo-Anosov piece or component of $f$. The results in the Nielsen-Thurston classification theorem  provide specific representatives for $f$ in the pseudo-Anosov pieces but we will not need this here. We refer the reader to the cited references above for proofs and more detailed statements. 

In this appendix we will only be concerned with properties of the lifts of $f$ that are true up to homotopy and we want information of lifts of $f$ up to iterate. For this reason we will assume throughout that $f$ fixes all the surfaces of the decomposition $\{S_j\}$ of $S$ and such that in each surface it either induces identity or does not fix any simple closed curve of the interior. (Compare with \cite[Section 2]{HT}.)

This easily implies that we can assume that $\hat g$ has a non-empty set of fixed points in $E$.  

We want to understand the structure of fixed points of $\hat g$ on $E$ and how this affects the dynamics in the interior of the disk (i.e. on $\tilde S$). For this, it is useful to classify fixed points of $\hat g$.  

\begin{obs}\label{rem-nofix} Notice that if a point is not fixed by $\hat g$ on $E$ then it already forces the dynamics in the interior of $\cD$. The crucial point is that there is a uniform constant $C>0$ such that the image $g(c)$ of a geodesic $c$ by $g$ is in a $C$-neighborhood\footnote{To see this, one can assume that $f$ is a diffeomorphism (by approximation) and so the derivative of $f$ is uniformly bounded. Then, the image of a geodesic by $g$ is a quasigeodesic with constants that only depend on $f$. So, by the Morse Lemma it follows that it is a uniform distance appart of the geodesic joining the endpoints.} of the geodesic $g_\ast(c)$. So, if a point $x \in E$ verifies that $\hat g(x)\neq x$ then it follows that it has a neighborhood $U$ in $\cD$ such that $g(U) \cap U = \emptyset$. This motivates the study of fixed points of $\hat g$. 
\end{obs}

We say that $x \in E$ is an \emph{attracting} fixed point of $\hat g$ in $E$ if there is an open neighborhood $U$ of $x$ such that $\hat g(\overline U) \subset U$ and such that $\{x\}= \bigcap_n \hat g^n(U)$. A fixed point is \emph{repelling} if it is attracting for $\hat g^{-1}$. 

\begin{obs} It is important to consider a stronger version of attraction as attracting behavior  on $E$ does not imply attracting behaviour on $\cD$. A simple example can be made by choosing a map homotopic to the identity via a homotopy $f_t$ with $f_0=\mathrm{id}$, $f_1(x_0)=x_0$ and such that the closed curve $c=\{f_t(x_0)\}_{t \in [0,1]}$ is homotopically non-trivial. Then, if one considers the lift $g$ of $f_1$ obtained by lifting the homotopy from the lift of $f_0$ consisting on a deck transformation $\gamma$ associated to $c$, it follows that $g$  acts in $E$ as the deck transformation $\gamma$ (i.e. with an attracting and a repelling fixed point) but there is a sequence of lifts of $x_0$ which accumulates both fixed points of $\hat g$ and which are fixed by $g$. 
\end{obs}

\begin{defi}[Strongly attracting/repelling fixed points]\label{def.stronglyattracting} We say that a fixed point $x \in E$ is a \emph{strongly attracting} point if  for every $K>0$ and open interval $x \in I$ there is an open interval $U$ containing $x$ in its interior such that $\hat g(\overline U) \subset U$ and such that the hyperbolic distance from the geodesic joining the endpoints of $U$ and of $\hat g(U)$ is larger than $K$. A \emph{strongly repelling} point is a strongly attracting point for $\hat g$. 
\end{defi}

Notice that the definition only depends on the knowledge of $\hat g$ and not of $g$. 

An important (simple) consequence is the following: 

\begin{lema}
If $x$ is a strongly attracting fixed point for $\hat g$, then $x$ is an attractor for $g$ acting on $\cD$, that is, there is a neighborhood $\cU$ of $x$ in $\cD$ such that $g(\overline \cU) \subset \cU$ and such that $\{x\} = \bigcap_{n \geq 0} g^n(\cU)$. 
\end{lema}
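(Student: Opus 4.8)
The plan is to take $\cU$ to be a hyperbolic half-disk. For an open arc $J\subset E$ write $\gamma_J$ for the complete geodesic of $\mathbb{H}^2$ with the same two endpoints as $J$, and let $\cU_J$ be the open component of $\cD\setminus\gamma_J$ whose trace on $E$ is $J$; thus $\overline{\cU_J}=\cU_J\cup\gamma_J$, $\cU_J\cap E=J$, and the frontier of $\cU_J$ in $\cD$ is exactly $\gamma_J$. First I would fix the uniform constant $C>0$ of Remark~\ref{rem-nofix}, so that $g(\hat s)$ lies in the $C$-neighborhood of $g_\ast\hat s$ for every geodesic $\hat s$ of $\cD$; then, using that a strongly attracting fixed point (Definition~\ref{def.stronglyattracting}) is in particular attracting on $E$, I would apply the definition with $K=3C$ and with $I$ a small interval whose closure lies in a basin of $x$ on $E$, producing an arc $U\ni x$ with $\hat g(\overline U)\subset U$ and $d_{\mathbb{H}^2}(\gamma_U,\gamma_{\hat g(U)})>3C$, and set $\cU:=\cU_U$.

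The core step is to check that $g(\overline\cU)\subset\cU$. Since $g|_E=\hat g$ one has $g(\overline U)=\hat g(\overline U)\subset U\subset\cU$, so it is enough to control $g(\gamma_U)$, the frontier of $\overline\cU$ in $\cD$. By Remark~\ref{rem-nofix}, $g(\gamma_U)$ lies in the $C$-neighborhood of $g_\ast\gamma_U=\gamma_{\hat g(U)}$; because $\hat g(\overline U)\subset U$, the geodesic $\gamma_{\hat g(U)}$ has both endpoints in the open arc $U$, hence $\gamma_{\hat g(U)}\subset\cU$ and $\gamma_{\hat g(U)}\cap\gamma_U=\emptyset$, and moreover $d_{\mathbb{H}^2}(\gamma_{\hat g(U)},\gamma_U)>3C>C$. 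Therefore $N_C(\gamma_{\hat g(U)})$ is a connected set meeting $\cU$ and disjoint from $\gamma_U=\partial_\cD\cU$, so it is contained in $\cU$; hence $g(\gamma_U)\subset\cU$, i.e. the frontier of $g(\overline\cU)$ lies in $\cU$. Now $\cD\setminus\cU$ is connected and contains a point fixed by $g$: indeed $\hat g(\overline U)\subset U$ forces $\hat g^{-1}$ to carry the closed arc $E\setminus U$ into its own interior, so $\hat g^{-1}$, hence $g$, has a fixed point $q\in E\setminus\overline U\subset\cD\setminus\overline\cU$, and $q\notin g(\overline\cU)$. Since $\cD\setminus\cU$ is connected and disjoint from the frontier of $g(\overline\cU)$, it lies entirely in the interior or in the exterior of $g(\overline\cU)$; the presence of $q$ rules out the former, so $\cD\setminus\cU$ lies in the exterior, that is, $g(\overline\cU)\subset\cU$. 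This already exhibits $\cU$ as a trapping neighborhood of $x$ in $\cD$.

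It remains to show that the maximal invariant set $\Lambda:=\bigcap_{n\ge0}g^n(\overline\cU)$ — compact, $g$-invariant, containing $x$ — equals $\{x\}$. Since $U$ was chosen inside a basin of $x$ on $E$, one gets $\Lambda\cap E=\bigcap_n\hat g^n(\overline U)=\{x\}$ at once. The remaining point, $\Lambda\cap\mathbb{H}^2=\emptyset$, is where the strength of the hypothesis is essential: for a merely attracting fixed point, the Remark preceding Definition~\ref{def.stronglyattracting} shows that there may be interior orbits, even fixed points, accumulating at $x$, so the conclusion genuinely needs the quantitative displacement gap. My plan is to run the previous step at all scales, obtaining, as $I$ ranges over a basis at $x$, nested trapping half-disks $\cU=\cU_0\supset\cU_1\supset\cdots$ with $g(\overline{\cU_m})\subset\cU_m$ for every $m$ and $\bigcap_m\overline{\cU_m}=\{x\}$ (the bounding geodesics $\gamma_{U_m}$ leave every compact subset of $\mathbb{H}^2$ in the direction of $x$), and then to prove $\Lambda\subset\overline{\cU_m}$ for every $m$, whence $\Lambda=\{x\}$. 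The step I expect to be the main obstacle is exactly this last inclusion: one must rule out any nontrivial recurrence of $g$ surviving deep inside the trapping region $\overline\cU$, and the mechanism is that the unbounded displacement condition in \emph{strongly attracting}, combined with the Morse estimate of Remark~\ref{rem-nofix}, funnels every orbit trapped in $\overline\cU$ toward $x$, so no orbit can avoid $\overline{\cU_m}$ forever; making ``funnels toward $x$'' quantitative across iterates (where the Morse constant for $g^n$ a priori degrades with $n$) is the delicate part. With $\cU$ as constructed, one then has a neighborhood of $x$ in $\cD$ with $g(\overline\cU)\subset\cU$ and $\bigcap_{n\ge0}g^n(\cU)=\{x\}$, i.e. $x$ is an attractor for $g$ on $\cD$.
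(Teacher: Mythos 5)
Your construction of the trapping half-disk $\cU$ and the verification that $g(\overline\cU)\subset\cU$ are correct and in the spirit of the paper (which simply invokes the Morse property of lifts of surface homeomorphisms, the content of Remark~\ref{rem-nofix}, and notes that the argument is then ``immediate''). The part you leave as a ``plan'' is, however, where the actual content lies, and it is worth being precise about what is and is not needed there.

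Your worry that the Morse constant for $g^n$ degrades with $n$ is legitimate, but the resolution is not to iterate the Morse estimate: rather, Definition~\ref{def.stronglyattracting} is applied at every scale, each time using the Morse constant $C$ of $g$ itself. Concretely, strong attraction produces a nested family of trapping arcs $U=U_0\supset U_1\supset\cdots$ with $\hat g(\overline{U_m})\subset U_m$, $d_{\mathbb{H}^2}(\gamma_{U_m},\gamma_{\hat g(U_m)})>3C$, and $\bigcap_m \overline{U_m}=\{x\}$; each $\cU_m:=\cU_{U_m}$ is then a trapping half-disk by exactly the argument you gave for $\cU$, with a single application of the Morse lemma to $g$ in each case. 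Because $\cU_m$ is trapping, if a point $z\in\Lambda:=\bigcap_{n\ge0}g^n(\overline\cU)$ lies outside $\overline{\cU_m}$ then so does its entire backward orbit, so $\Lambda\setminus\cU_m$ is a $g^{-1}$-invariant subset of $\Lambda$; since $\Lambda\cap E=\{x\}$ and $x\in\cU_m$, the set $\Lambda\setminus\cU_m$ is a \emph{compact subset of $\mathbb{H}^2$}. Thus any $z\in\Lambda\cap\mathbb{H}^2$ produces, for each $m$ with $z\notin\overline{\cU_m}$, a nonempty compact $g^{-1}$-invariant set in $\mathbb{H}^2$ living inside the $\cD$-small neighborhood $\overline\cU$ of $x$; this is precisely the kind of bounded-displacement recurrence near $x$ that the strong attraction forbids (formalized later in the appendix by Lemma~\ref{l.boundarydyn}). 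So the inclusion $\Lambda\subset\overline{\cU_m}$ for all $m$ that you were aiming for is the right statement, but it is established by this compactness/trapping argument at each scale rather than by controlling $g^n(\gamma_U)$ directly. In short: your trapping step is right and matches the paper; the closing step is a genuine (if small) gap in your write-up, and the fix is to exploit the nested family of trapping half-disks provided by strong attraction at every scale, not to iterate the Morse lemma.
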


The proof is immediate from the fact that $g$ maps a geodesic $c$ to curve a curve $g(c)$ which is at (uniformly) bounded distance from the geodesic $g_\ast (c)$ as remarked above. 

\subsection{Statement of results}

The first relevant result is implicit in \cite[Lemma 1.3]{HT}:

\begin{teo}\label{teo.boundarygeneral} If $\mathrm{Fix}(\hat g) \cap \mathrm{Fix}(\gamma) \neq \emptyset$ for some $\gamma \in \Gamma \setminus \{\mathrm{id}\}$ then $g$ commutes with $\gamma$ and $\mathrm{Fix}(\gamma) \subset \mathrm{Fix}(\hat g)$. In this case there are two possibilities: 
\begin{itemize}
\item either $\mathrm{Fix}(\hat g)=\mathrm{Fix}(\gamma)$ and the fixed points of $\hat g$ are not strongly attracting nor repelling, 
\item or $\mathrm{Fix}(\hat g) \neq \mathrm{Fix}(\gamma)$ in which case, for every $\eta \in \Gamma$ such that $\mathrm{Fix}(\eta) \cap \mathrm{Fix}(\hat g) \neq \emptyset$ it follows that $\mathrm{Fix}(\eta)$ is contained in the set of accumulation points of $\mathrm{Fix}(\hat g)$. 
\end{itemize}
Moreover, if $f$ is not homotopic to the identity then $\mathrm{Fix}(\hat g)$ has empty interior. 
\end{teo}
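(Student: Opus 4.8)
The plan is to reduce the statement to two facts: a lift $g$ of $f$ normalizes the deck group $\Gamma=\pi_1(S)$, so $\gamma\mapsto g\gamma g^{-1}$ is an automorphism of $\Gamma$; and, as recorded in Remark~\ref{rem-nofix}, $g$ carries each geodesic of $\mathbb{H}^2$ to a quasigeodesic lying a uniformly bounded Hausdorff distance from the geodesic with the same endpoints.

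For the first assertion, fix $p\in\mathrm{Fix}(\hat g)\cap\mathrm{Fix}(\gamma)$, say $p=\gamma^+$, and put $\gamma':=g\gamma g^{-1}\in\Gamma$. Since $\hat g$ conjugates $\gamma|_E$ to $\gamma'|_E$ and fixes $p$, the element $\gamma'$ fixes $p$; as $\Gamma$ is discrete and torsion-free, two nontrivial elements with a common fixed point on $E$ lie in a common maximal cyclic subgroup, so $\mathrm{Fix}(\gamma')=\mathrm{Fix}(\gamma)=\{\gamma^+,\gamma^-\}$. Then $\hat g(\gamma^-)$ is fixed by $\gamma'$, hence lies in $\{\gamma^+,\gamma^-\}$, and injectivity of $\hat g$ with $\hat g(\gamma^+)=\gamma^+$ forces $\hat g(\gamma^-)=\gamma^-$; thus $\mathrm{Fix}(\gamma)\subset\mathrm{Fix}(\hat g)$. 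Letting $\gamma_0$ generate the maximal cyclic group $\mathrm{Stab}_\Gamma(\gamma^+)$ (so $\gamma=\gamma_0^m$), the element $g\gamma_0 g^{-1}$ fixes $\gamma^\pm$, hence lies in $\langle\gamma_0\rangle$; the same holds for $g^{-1}\gamma_0 g$, so conjugation by $g$ restricts to an automorphism of $\langle\gamma_0\rangle\cong\Z$, giving $g\gamma_0 g^{-1}=\gamma_0^{\pm1}$. The exponent $-1$ is impossible: $\hat g$ fixes $\gamma^+$, so it conjugates the local contraction of $\gamma_0|_E$ at $\gamma^+$ to a local contraction, whereas $\gamma_0^{-1}|_E$ is locally expanding at $\gamma^+$. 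Hence $g$ commutes with $\gamma_0$, and therefore with $\gamma$; this also makes the dichotomy $\mathrm{Fix}(\hat g)=\mathrm{Fix}(\gamma)$ vs.\ $\mathrm{Fix}(\hat g)\neq\mathrm{Fix}(\gamma)$ exhaustive.

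For the dichotomy, first suppose $\mathrm{Fix}(\hat g)=\mathrm{Fix}(\gamma)=\{\gamma^\pm\}$. Because $g$ commutes with the isometry $\gamma_0$, the displacement $x\mapsto d_{\mathbb{H}^2}(x,g(x))$ is $\gamma_0$-invariant, hence bounded by some $C_1$ on the axis $A=A(\gamma_0)$, which is $\gamma_0$-cocompact. For a small interval $U\ni\gamma^+$ disjoint from $\gamma^-$ with $\hat g(\overline U)\subset U$ (if no such $U$ exists, $\gamma^+$ is trivially not strongly attracting), the geodesic $\mathrm{geod}(\partial U)$ separates $\gamma^+$ from $\gamma^-$ and so meets $A$ in a point $z$; moreover $\mathrm{geod}(\partial\hat g(U))$ lies within a bounded distance $C'$ of the quasigeodesic $g(\mathrm{geod}(\partial U))$ (same endpoints, by Remark~\ref{rem-nofix} and the Morse lemma), which contains $g(z)$. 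Hence $d_{\mathbb{H}^2}(\mathrm{geod}(\partial U),\mathrm{geod}(\partial\hat g(U)))\le d(z,g(z))+C'\le C_1+C'$ for all such $U$, contradicting the unbounded-distance requirement of Definition~\ref{def.stronglyattracting}; running the same argument for $g^{-1}$ (which also commutes with $\gamma_0$) shows neither $\gamma^+$ nor $\gamma^-$ is strongly attracting or repelling. Now suppose instead $\mathrm{Fix}(\hat g)\neq\mathrm{Fix}(\gamma)$, and let $\eta\in\Gamma$ with $\mathrm{Fix}(\eta)\cap\mathrm{Fix}(\hat g)\neq\emptyset$. The first assertion, applied to $\eta$, gives that $g$ commutes with the primitive $\eta_0$ and $\mathrm{Fix}(\eta)\subset\mathrm{Fix}(\hat g)$; also $\mathrm{Fix}(\hat g)\setminus\mathrm{Fix}(\eta)\neq\emptyset$, since otherwise $\mathrm{Fix}(\hat g)=\mathrm{Fix}(\eta)$ would be a two-point set containing the two-point set $\mathrm{Fix}(\gamma)$, forcing equality with $\mathrm{Fix}(\gamma)$. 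Since $g$ commutes with $\eta_0$, the closed set $\mathrm{Fix}(\hat g)$ is $\langle\eta_0\rangle$-invariant (for $z\in\mathrm{Fix}(\hat g)\subset E$ one has $g(\eta_0 z)=\eta_0 g(z)=\eta_0 z$); picking $z\in\mathrm{Fix}(\hat g)\setminus\mathrm{Fix}(\eta)$, the orbit $\{\eta_0^n z\}_{n\in\Z}\subset\mathrm{Fix}(\hat g)$ accumulates, by the north--south dynamics of $\eta_0$ on $E$, exactly on $\eta^+$ and $\eta^-$, so $\mathrm{Fix}(\eta)$ lies in the accumulation set of $\mathrm{Fix}(\hat g)$.

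Finally, for the ``moreover'': if $\mathrm{Fix}(\hat g)$ has nonempty interior it contains an open arc $J$, and for every $\gamma\in\Gamma$ with attracting point $\gamma^+\in J$ we have $\gamma^+\in\mathrm{Fix}(\gamma)\cap\mathrm{Fix}(\hat g)$, so the first assertion yields $\gamma^-\in\mathrm{Fix}(\hat g)$ as well. As $\Gamma$ is cocompact, the pairs $(\gamma^-,\gamma^+)$ are dense in $E\times E$, so such points $\gamma^-$ are dense in $E$; since $\mathrm{Fix}(\hat g)$ is closed this forces $\mathrm{Fix}(\hat g)=E$, i.e.\ $\hat g=\mathrm{id}$, and comparing with the identity map and its trivial lift, via the fact that equal boundary actions force homotopic maps, gives $f\simeq\mathrm{id}$; the contrapositive is the claim. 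The one genuinely technical step is the estimate in the first case of the dichotomy, where the quasigeodesic bound of Remark~\ref{rem-nofix} and the observation that $\mathrm{geod}(\partial U)$ must cross the axis $A$ are used to convert ``$g$ commutes with an isometry'' into a uniform upper bound on $d_{\mathbb{H}^2}(\mathrm{geod}(\partial U),\mathrm{geod}(\partial\hat g(U)))$; everything else is bookkeeping with fixed points on $E$.
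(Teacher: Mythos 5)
Your proof is correct and follows essentially the same route as the paper's (terse) sketch, which defers the key facts to \cite[Lemma 1.3]{HT}: show that $g\gamma g^{-1}$ is a power of a primitive $\gamma_0$, pin the exponent to $+1$, deduce commutation and $\mathrm{Fix}(\gamma)\subset\mathrm{Fix}(\hat g)$, then use commutation with $\gamma_0$ to bound the displacement along $A(\gamma_0)$ (ruling out strong attraction/repulsion), $\langle\eta_0\rangle$-invariance of $\mathrm{Fix}(\hat g)$ for the accumulation claim, and density of axis endpoints for the empty-interior claim. You fill in details the paper leaves implicit, notably the argument excluding $g\gamma_0 g^{-1}=\gamma_0^{-1}$ and the quantitative estimate via the axis and the quasigeodesic bound of Remark~\ref{rem-nofix}.
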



To get more precise information, we want to separate between the cases where $f$ contains a pseudo-Anosov part or not. In the case where it does not have pseudo-Anosov components, what we obtain is the following: 

\begin{teo}\label{teo.boundaryDehn} 
Let $f$ be a surface homeomorphism which is not homotopic to the identity and such that it has no pseudo-Anosov piece. Assume moreover that $g: \cD \to \cD$ is a lift of $f$ which has fixed points in $E$. If there are more than two fixed points of $\hat g$, then there exists $\gamma \in \Gamma$ such that $\mathrm{Fix}(\gamma) \subset \mathrm{Fix}(\hat g)$; moreover, the set of fixed points of $\hat g$ is a Cantor set and there is a dense set of $\mathrm{Fix}(\hat g)$ consisting of fixed points of $\eta \in \Gamma$. Otherwise $\hat g$ has exactly two fixed points which can be either strongly attracting and repelling or $\hat g$ commutes with some deck transformation. \end{teo}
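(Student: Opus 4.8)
The plan is to reduce everything to statements about the geodesic lamination machinery that underlies the Nielsen--Thurston theory. Since $f$ has no pseudo-Anosov piece and is not homotopic to the identity, on each subsurface $S_j$ of the reducing decomposition the map $f$ either induces the identity on $\pi_1(S_j)$ or is periodic, and the reduction curves $c_1,\dots,c_d$ are genuinely there (some of them nontrivially permuted-then-fixed by a power, and with nontrivial twisting, since $f \not\simeq \mathrm{id}$). Working in $\cD = \mathbb{H}^2 \cup E$, the key object is the full preimage $\tilde\Lambda$ of the reduction curves: a $\Gamma$-invariant geodesic lamination whose leaves have endpoint pairs $\{A(\eta)^{\pm}\}$, $\eta$ ranging over conjugates of the $\gamma_i$. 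Because $\hat g$ is a lift, it permutes (up to the homotopy that does not move endpoints) the leaves of $\tilde\Lambda$; the fixed points of $\hat g$ on $E$ that are ``of the form $\mathrm{Fix}(\eta)$'' will be precisely those arising from leaves that $\hat g$ sends to themselves.

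First I would invoke Theorem \ref{teo.boundarygeneral}: if $\mathrm{Fix}(\hat g)$ contains a point of $\mathrm{Fix}(\gamma)$ for some nontrivial $\gamma\in\Gamma$, then $g$ commutes with $\gamma$ and $\mathrm{Fix}(\gamma)\subset\mathrm{Fix}(\hat g)$, with the dichotomy between $\mathrm{Fix}(\hat g)=\mathrm{Fix}(\gamma)$ (two fixed points, a commuting deck transformation, not strongly attracting/repelling) and $\mathrm{Fix}(\hat g)\supsetneq\mathrm{Fix}(\gamma)$. So the ``otherwise'' clause is already handled once we show: either $|\mathrm{Fix}(\hat g)|=2$, or $\mathrm{Fix}(\hat g)$ meets some $\mathrm{Fix}(\gamma)$. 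Suppose then that $\hat g$ has more than two fixed points; I want to produce a $\gamma$ with $\mathrm{Fix}(\gamma)\subset\mathrm{Fix}(\hat g)$. Here is where the absence of a pseudo-Anosov component is essential: every Nielsen class of fixed points of $\hat g$ on $E$ must ``come from'' a reduction curve or from a periodic/identity block, and a block on which $f$ acts trivially on $\pi_1$ forces, via the $C$-neighborhood/Morse-lemma remark (Remark \ref{rem-nofix}) plus Theorem \ref{teo.boundarygeneral}, that each isolated family of fixed points is anchored at the axis of some $\eta\in\Gamma$. Concretely: take a fixed point $x$ of $\hat g$ that is an endpoint of a leaf $\ell$ of $\tilde\Lambda$ whose other endpoint is also fixed, or that lies in the limit set of a block subsurface group $\Gamma_j\subset\Gamma$ with $g$ centralizing $\Gamma_j$; in either case one extracts $\gamma\in\Gamma_j$ (or $\gamma=\gamma_i$) with $\mathrm{Fix}(\gamma)\subset\mathrm{Fix}(\hat g)$. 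Then Theorem \ref{teo.boundarygeneral} upgrades this to: $\mathrm{Fix}(\hat g)$ contains the limit set of $\langle\gamma\rangle$ and, by the strict inclusion case, contains the limit set of the centralizer of $g$, which (since $f$ is not homotopic to the identity, so $\mathrm{Fix}(\hat g)$ has empty interior) is a nowhere-dense perfect $\Gamma'$-invariant closed set, i.e. a Cantor set; density of $\{\mathrm{Fix}(\eta):\eta\in\Gamma\}$ in it follows from minimality of the limit set action of the (non-elementary, since $\ge 3$ fixed points) group of such $\eta$'s.

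The main obstacle I expect is the bookkeeping in the step ``$\ge 3$ fixed points $\Rightarrow$ some $\mathrm{Fix}(\gamma)\subset\mathrm{Fix}(\hat g)$'': one has to rule out the scenario of three or more fixed points of $\hat g$ none of which is a fixed point of a deck transformation. Morally this cannot happen when there is no pseudo-Anosov piece, because a genuine ``floating'' fixed point at infinity not pinned to the lamination is exactly the signature of pseudo-Anosov (or identity) behavior, and the identity behavior has been excluded by hypothesis $f\not\simeq\mathrm{id}$ together with working up to iterate on each block. Making this rigorous requires relating the combinatorics of $\mathrm{Fix}(\hat g)$ to the Nielsen-class structure on the boundary --- essentially the content of \cite[Lemma~1.3]{HT} and the index theory of \cite{HT} --- and checking that every such class either is an interval (identity block, excluded) or is carried by $\tilde\Lambda$ (reducible block). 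Once that is in hand, the Cantor-set conclusion, the density of $\mathrm{Fix}(\eta)$'s, and the two-fixed-point ``otherwise'' statement all drop out of Theorem \ref{teo.boundarygeneral} as above.
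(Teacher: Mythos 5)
Your proposal captures the right ingredients (the reduction curves, their lifted lamination $\tilde\Lambda$, Theorem \ref{teo.boundarygeneral}, and Lemma \ref{l.boundarydyn2} for the strongly attracting/repelling dichotomy), and you correctly isolate the crux as ``$\geq 3$ fixed points $\Rightarrow$ some $\mathrm{Fix}(\gamma)\subset\mathrm{Fix}(\hat g)$.'' But at that crux you assert rather than prove: you \emph{assume} that some fixed point of $\hat g$ is already an endpoint of a leaf of $\tilde\Lambda$ whose other endpoint is fixed, or lies in the limit set of a lifted block $\tilde S_j$ that $g$ preserves, and you flag this yourself as ``the main obstacle.'' Nothing in your argument rules out the a priori possibility of four or more ``floating'' alternating strongly attracting/repelling fixed points unattached to $\tilde\Lambda$ (which is exactly what does happen in the pseudo-Anosov case, so the exclusion genuinely uses the no-pA hypothesis and cannot be waved away by appealing to Remark \ref{rem-nofix} alone).

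The paper closes this gap by exploiting the \emph{tree} structure that you never mention: vertices are the connected components of the lifts of the $\tilde S_j$, edges join components that share a lift of a reduction curve, and $g$ acts on this tree. The dichotomy ``$g$ fixes a vertex'' versus ``$g$ has an invariant axis in the tree'' is what produces the two cases of the theorem. If $g$ fixes some $\tilde S_j$, then (since $f$ is homotopic to the identity on $S_j$) $g$ restricted to $\tilde S_j$ is boundedly close to a deck transformation $\gamma$ of $\tilde S_j$; this $\gamma$ is the element you are trying to conjure, and when $\gamma=\mathrm{id}$ the fixed-point set is \emph{exactly} $\partial_\infty\tilde S_j$, which is a Cantor set, and density of axis endpoints is immediate. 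If $g$ fixes no vertex, the axis in the tree forces exactly two fixed points at infinity, $g$ cannot commute with any deck transformation (since the corresponding closed geodesic would cross a twisting curve $c_i$), and Lemma \ref{l.boundarydyn2} then gives strongly attracting/repelling. Your ``centralizer of $g$'' argument for the Cantor set is also too loose for the same reason: without pinning things down to a specific $\tilde S_j$ you cannot conclude equality (rather than just inclusion) of $\mathrm{Fix}(\hat g)$ with a limit set. In short: the idea is on the right track, but the tree-action dichotomy is the missing engine, and without it the central step is unsupported.
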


When there is a pseudo-Anosov piece, one gets a different behaviour. 

\begin{teo}\label{teo.boundaryPA} 
Let $f$ be a surface homeomorphism with a pseudo-Anosov piece and let $g: \cD \to \cD$ be a lift of $f$. Then, up to taking an iterate, one of the following mutually exclusive possibilities holds: 
\begin{itemize}
\item either $\mathrm{Fix}(\hat g)$ consists of an even number of points which are alternatingly strongly attracting and repelling, or,
\item $\mathrm{Fix}(\hat g) \cap \mathrm{Fix}(\gamma) \neq \emptyset$ for some $\gamma \in \Gamma \setminus \{\mathrm{id}\}$. 
\end{itemize}
Moreover, up to taking iterates of $f$ there are always lifts which belong to the first class and have exactly four fixed points in $E$. 
\end{teo}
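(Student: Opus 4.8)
The plan is to run a Nielsen--Thurston style analysis, organizing the argument around the action of the lift $g$ on the tree dual to the decomposition of $\mathbb{H}^2$ induced by the reducing curves, and feeding in the structure theory of fixed points of pseudo-Anosov lifts on the circle at infinity (as in \cite{HT,Fathi,Miller,Gilman}). First I would pass to an iterate so that $f=f_0$ is in Nielsen--Thurston form: it preserves each reducing geodesic $C(c_i)$ and each complementary piece $S_j$, and on each $S_j$ it induces either the identity on $\pi_1$ or a map fixing no essential simple closed curve in the interior. The preimages of the $C(c_i)$ form a locally finite family of pairwise disjoint geodesics in $\mathbb{H}^2$ cutting it into ``tiles'' $\tilde S_j$ (lifts of the pieces), with dual tree $T$ whose vertices are tiles and whose edges are reducing geodesics. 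The lift $g$ acts on $T$, and after a further square it is elliptic fixing an edge (hence a reducing geodesic $\tilde c_i$), elliptic fixing exactly one vertex (a tile $\tilde S_j$), or hyperbolic with an axis.

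If $g$ fixes $\tilde c_i$, then $\hat g$ fixes the two endpoints of $\tilde c_i$, which are $\mathrm{Fix}(\gamma)$ for the deck transformation $\gamma$ along $\tilde c_i$, so we are in the second alternative. If $g$ fixes a tile $\tilde S_j$ over an \emph{identity} piece, then $f|_{S_j}$ is homotopic to the identity, so $g|_{\tilde S_j}$ is equivariantly homotopic to an element $\gamma\in\pi_1(S_j)=\mathrm{Stab}(\tilde S_j)$, whence $\hat g$ agrees with $\gamma$ on the limit set $\partial_\infty\tilde S_j$; since $\mathrm{Fix}(\gamma)\subset\partial_\infty\tilde S_j$ (take $\gamma$ peripheral if $\gamma=\mathrm{id}$), we are again in the second alternative. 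The hyperbolic case is handled the same way: the axis of $g$ in $T$ is a $g$-periodic bi-infinite chain of tiles, hence a quasigeodesic with two distinct endpoints $\xi^\pm\in E$ fixed by $\hat g$, and a short analysis---reducing to the reducible/Dehn-twist picture of Theorem~\ref{teo.boundaryDehn} when only identity pieces are crossed, and to pseudo-Anosov expansion when a pseudo-Anosov piece is crossed---shows that $\mathrm{Fix}(\hat g)$ is either two strongly attracting/repelling points or again contains $\mathrm{Fix}(\gamma)$ for some $\gamma\neq\mathrm{id}$.

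The core case is that $g$ fixes exactly one tile $\tilde S_i$ over a pseudo-Anosov piece. If $g|_{\tilde S_i}$ fixes a boundary geodesic of $\tilde S_i$ it commutes with the corresponding peripheral element and we land in the second alternative. Otherwise $g|_{\tilde S_i}$ commutes with no nontrivial element of $\mathrm{Stab}(\tilde S_i)$ (such an element would yield a periodic essential curve in the interior of a pseudo-Anosov piece), so $\mathrm{Fix}(\hat g)\cap\mathrm{Fix}(\gamma)=\emptyset$ for all $\gamma\neq\mathrm{id}$; moreover any $z\in E\setminus\partial_\infty\tilde S_i$ lies beyond a boundary geodesic $\tilde c$ of $\tilde S_i$ and $g$ carries it beyond $g(\tilde c)\neq\tilde c$, so $z$ is not fixed and $\mathrm{Fix}(\hat g)\subset\partial_\infty\tilde S_i$. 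Then I would invoke the structure of a genuine pseudo-Anosov lift acting on its limit circle: the fixed set is finite and even, arising from the separatrices at a periodic (hence, after an iterate, fixed) singularity or regular point of the invariant foliations together with the contraction/expansion of the transverse measures, and consecutive fixed points alternate between strongly attracting and strongly repelling. Mutual exclusivity of the two alternatives, and the facts that in the first alternative the fixed points are isolated and that strong attractors and strong repellers alternate around $E$, then follow from Theorem~\ref{teo.boundarygeneral} together with the monotonicity of the displacement of $\hat g$ on the arcs complementary to $\mathrm{Fix}(\hat g)$. I expect this genuinely pseudo-Anosov subcase to be the main obstacle: it is precisely where the quantitative strongly attracting/repelling behaviour, the finiteness of the fixed set, and the alternation must all be extracted from the fixed-point theory of pseudo-Anosov lifts, and I would quote the needed input from \cite{HT,Fathi,Miller,Gilman} rather than reprove it.

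For the last assertion I would pick a pseudo-Anosov piece $S_i$, pass to an iterate so that $f|_{S_i}$ is pseudo-Anosov with a fixed point $q$ in its interior that is a regular point of the invariant foliations (periodic regular points are dense), and let $g$ be the lift fixing a chosen lift $\tilde q\in\mathrm{int}(\tilde S_i)$. Since distinct fixed points of a pseudo-Anosov lift are never Nielsen equivalent, $g$ fixes no other lift of $q$, hence commutes with no nontrivial element of $\mathrm{Stab}(\tilde S_i)$ and fixes no boundary geodesic of $\tilde S_i$; so we are in the subcase just discussed. The two stable and two unstable separatrices at $q$ lift to four quasigeodesic rays out of $\tilde q$ with four distinct, cyclically alternating ideal endpoints; the expansion of the unstable foliation makes the two unstable endpoints strongly attracting and the contraction of the stable foliation makes the two stable endpoints strongly repelling, while the ``four quadrant'' dynamics inside $\tilde S_i$ together with $\mathrm{Fix}(\hat g)\subset\partial_\infty\tilde S_i$ shows these are the only fixed points. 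Hence this $g$ is in the first alternative with exactly four fixed points.
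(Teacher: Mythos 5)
Your proposal follows the same Nielsen--Thurston framework as the paper's argument and reaches the same conclusions, but it organizes the case analysis more explicitly around the action of $g$ on the tree dual to the reducing geodesics, and it proves the existential ``exactly four'' clause by a genuinely different argument. The paper sets up the same laminations $\Lambda^s,\Lambda^u$ on the pseudo-Anosov pieces, chooses a lift $g$ fixing a tile $\tilde S_i$ but no boundary geodesic of it, identifies this with a \emph{non-peripheral} Nielsen class in the sense of Handel--Thurston (giving an even number $\geq 4$ of fixed points at infinity that are endpoints of leaves of $\tilde\Lambda^s\cup\tilde\Lambda^u$, hence not in any $\mathrm{Fix}(\gamma)$, so $g$ commutes with no deck transformation and Lemma~\ref{l.boundarydyn2} gives the alternating strongly attracting/repelling behaviour), and then shows that not every such Nielsen class can have more than four ideal fixed points by an area count: extra fixed points force an open ideal polygon in the complement of the lifted lamination, each such region has uniformly bounded-below hyperbolic area (\cite[Lemma 4.1]{HT}), and there is only finite area to go around. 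You instead produce a lift with exactly four fixed points directly, by passing to an iterate with an interior regular fixed point $q$ of the pA representative and taking $g$ to fix a lift $\tilde q$; the four ideal endpoints of the lifted stable/unstable separatrices at $\tilde q$ then give the alternating quadruple. Both routes are correct; the area argument is closer to the cited Handel--Thurston machinery, while your separatrix argument is more constructive and avoids quantifying over all Nielsen classes, at the cost of invoking density of regular periodic points and the fact that a pA Nielsen class is a singleton.

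Two small points worth tightening in your write-up. First, the step ``$g$ fixes no other lift of $q$, hence commutes with no nontrivial element of $\mathrm{Stab}(\tilde S_i)$'' is stated as if it followed immediately from ``distinct fixed points of a pA map are never Nielsen equivalent''; what you actually need is the sharper fact that for a pA map the lift associated to an interior Nielsen class has a \emph{unique} fixed point in $\tilde S_i$, equivalently that a pA automorphism has no nontrivial fixed conjugacy class other than the peripheral ones. This is standard (and is exactly what the paper's appeal to Theorem~\ref{teo.boundarygeneral} plus the aperiodicity of the lamination leaves is doing), but it is a slightly stronger statement than the one you quote, so it deserves to be said in that form. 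Second, when $g$ fixes exactly one tile $\tilde S_i$ over a pA piece and preserves no boundary geodesic, your dichotomy between ``finite even alternating'' and ``second alternative'' is established, but it would be cleaner to mirror the paper's subdivision: either $g$ fixes some leaf of the lifted stable lamination (whence an even number $\geq 4$ of alternating strongly attracting/repelling points, via the transverse unstable lamination and Lemma~\ref{l.boundarydyn2}), or it acts freely on the leaf space, which is an $\mathbb{R}$-tree, hence has an axis giving exactly two strongly attracting/repelling fixed points. As written, your invocation of ``the structure of a pseudo-Anosov lift acting on its limit circle'' leaves this internal dichotomy implicit.
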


Notice that when $f$ is pseudo-Anosov in the whole surface $S$ (equivalently, if no simple closed curve is mapped to the same free homotopy class by some iterate of $f$) only the first possibility can occur. One can say more in each of the possibilities (e.g. in the first case one can characterize which lifts have four or more fixed points in $E$ and also show that some have exactly two fixed points). We state some further properties in the second case (which can only happen if $f$ is reducible): 

\begin{addendum}\label{addPA}
In the setting of Theorem \ref{teo.boundaryPA} the lifts corresponding to the second possibility have the following options: 
\begin{itemize}
\item there is a strongly attracting fixed point (in which case there is $\gamma \in \Gamma$ whose fixed points are also fixed by $\hat g$ accumulated from both sides by alternating strongly attracting and strongly repelling fixed points of $\hat g$),
\item $\sharp \mathrm{Fix}(\hat g) =2$ one attracting but not strongly attracting and one repelling but not strongly repelling,
\item the set of fixed points $\mathrm{Fix}(\hat g)$ is a cantor set, in particular, no fixed point is either attracting or repelling.
\end{itemize}
Moreover, there are always lifts with the first and second options and there are lifts with the third option only if $f$ has a periodic piece. 
\end{addendum}

\subsection{Outline of the proof}

Let us first quickly explain how to prove Theorem \ref{teo.boundarygeneral}.

\begin{proof}[Outiline of the proof of Theorem \ref{teo.boundarygeneral}]
The key points follow from \cite[Lemma 1.3]{HT}.  Let $\gamma$ be primitive. If one considers $\eta= g^{-1} \gamma g$ one gets a deck transformation that shares a fixed point with $\gamma$ and therefore $\eta = \gamma^k$ for some $k >0$ but since $\gamma$ is primitive one must have $k=1$. It follows that $g$ commutes with $\gamma$ which implies that $\gamma$ preserves the fixed point set of $\hat g$ and therefore this set being closed forces $\mathrm{Fix}(\gamma) \subset \mathrm{Fix}(\hat g)$. Moreover, if there are fixed points of $\hat g$ which are not fixed by $\gamma$, then these must accumulate on both $\gamma^+$ and $\gamma^-$.   

The fact that if $g$ commutes with a deck transformation $\gamma$ then the endpoints $\gamma^{\pm}$ cannot be strongly attracting or repelling is quite direct. 

Finally, it is easy to see that if the set of fixed points of $\hat g$ has non-empty interior, then $f$ has to be the identity as it induces identity in a sufficiently large set of closed geodesics in $S$. 
\end{proof} 

To prove the rest of the results, one important property is implicit in the proof of \cite[Lemma 3.1]{HT}. Compare with Remark \ref{rem-nofix}. 

\begin{lema}\label{l.boundarydyn}
Let $g : \cD \to \cD$ be a lift of a surface homeomorphism $f$.  Assume that there exists $x \in \mathrm{Fix}(\hat g)$ such that for some $K>0$ and every  neighborhood $U$ of $x$ in $\cD$ there is $y \in \tilde S \cap U$ such that $d_{\tilde S}(y, g(y))< K$. Then, for every neighborhood $U$ of $x$ there is $\gamma \in \pi_1(S)$ such that $g$ commutes with $\gamma$ and $A(\gamma)$ intersects $U$ and has $\gamma^+ \in U \cap E$.  
\end{lema}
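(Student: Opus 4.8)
The plan is to run the classical renormalization argument for lifts of surface homeomorphisms: use the points of bounded $g$-displacement accumulating at $x$ to conjugate $g$ by deck transformations, extract a genuine symmetry $\gamma$ of $g$ from the resulting conjugates, and then read off from the convergence dynamics of $\pi_1(S)$ on $\cD$ that one may take the axis of such a $\gamma$ as close to $x$ as desired. Throughout, fix a basepoint $o\in\mathbb{H}^2$ and a compact set $\mathcal{K}\subset\mathbb{H}^2$ containing $o$ whose $\Gamma$-translates cover $\mathbb{H}^2$.

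First I would take $y_n\in\tilde S$ with $y_n\to x$ and $d_{\tilde S}(y_n,g(y_n))<K$, and choose $\alpha_n\in\Gamma$ with $\alpha_n(y_n)\in\mathcal{K}$; since $y_n\to x\in E$ and $\mathcal{K}$ is compact, each element of $\Gamma$ occurs only finitely often among the $\alpha_n$, so after passing to a subsequence they are pairwise distinct. Because conjugation by a deck transformation carries a lift of $f$ to a lift of $f$, one can write $\alpha_n g\alpha_n^{-1}=\beta_n g$ with $\beta_n\in\Gamma$. The role of the hypothesis is that $\beta_n g(\alpha_n y_n)=\alpha_n g(y_n)$ stays in the fixed compact set $N_K(\mathcal{K})$, while $g(\alpha_n y_n)\in g(\mathcal{K})$ is also confined to a fixed compact set; proper discontinuity of $\Gamma$ then pins $\beta_n$ to a finite subset of $\Gamma$, and after a further subsequence $\beta_n\equiv\beta$. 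Hence, fixing $n_0$ in this subsequence, the elements $\gamma_n:=\alpha_{n_0}^{-1}\alpha_n$ (for $n$ in the subsequence) all commute with $g$; these are the candidates. A short estimate — $d_{\tilde S}(\alpha_n^{-1}(w),y_n)=d_{\tilde S}(w,\alpha_n y_n)$ is bounded for any fixed $w$, so $\alpha_n^{-1}(w)$ is a bounded perturbation of $y_n$ and therefore converges to $x$ in $\cD$ — gives $\gamma_n^{-1}(o)=\alpha_n^{-1}(\alpha_{n_0}o)\to x$. Finally I would record that, $\Gamma$ being a cocompact torsion-free Fuchsian group, each nontrivial $\gamma_n$ is loxodromic, with axis $A(\gamma_n)$ and endpoints $\gamma_n^\pm\in E$.

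The substantial step — and the one I expect to be the real obstacle — is upgrading the statement ``$\gamma_n^{-1}(o)\to x$'' about a single image point to the statement ``$\gamma_n^-\to x$'' about the axes. For this I would invoke that $\pi_1(S)$ acts on the closed disk $\cD$ as a convergence group: after passing to a subsequence there are $a,b\in E$ with $\gamma_n\to a$ off $\{b\}$ and $\gamma_n^{-1}\to b$ off $\{a\}$, uniformly on compacta; since $\gamma_n^{-1}(o)\to x$ and $o\ne a$, necessarily $b=x$. Then, given any neighbourhood $W$ of $x$ in $\cD$, for $n$ large one has both $\gamma_n^{-1}(\overline W)\subset W$ and $\gamma_n^{-1}(o)\in W$, so the forward $\gamma_n^{-1}$-orbit of $o$ is trapped in $W$ and its limit $\gamma_n^-$ lies in $\overline W$; thus $\gamma_n^-\to x$. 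The point needing a little care is the degenerate case $a=x$ (so that $\gamma_n$ and $\gamma_n^{-1}$ both collapse to $x$), but the same trapping applied to $\gamma_n^{-1}(\cD\setminus W)\subset W$ still forces the fixed point $\gamma_n^-$ into $W$, so nothing breaks; the degenerate case is actually favourable.

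To conclude, given an arbitrary neighbourhood $U$ of $x$, pick $n$ large enough that $\gamma_n^-\in U$ and set $\gamma:=\gamma_n^{-1}$. Then $\gamma$ commutes with $g$, is nontrivial (as the $\gamma_n$ are eventually distinct and nontrivial), has attracting endpoint $\gamma^+=\gamma_n^-\in U\cap E$, and since $U$ is open in $\cD$ and $A(\gamma)=A(\gamma_n)$ accumulates onto $\gamma^+$, also $A(\gamma)\cap U\ne\emptyset$. This is exactly the assertion. Everything apart from the convergence-group bookkeeping of the third paragraph is the routine compactness-and-conjugation mechanism; that paragraph is where the argument really has to be done carefully.
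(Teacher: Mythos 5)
Your proposal is correct and follows the same mechanism as the paper's proof: use bounded displacement of the $y_n$ together with cocompactness to force infinitely many conjugates $\alpha_n g\alpha_n^{-1}$ (equivalently the paper's $\gamma_n^{-1}g\gamma_n$) to coincide, producing a nontrivial deck transformation commuting with $g$, and then argue that its axis endpoint can be placed in $U$. The only substantive difference is presentational: the paper compresses the final step into the sentence ``since one can choose $m-n$ arbitrarily large, one gets that $A(\gamma_n\gamma_m^{-1})$ has an endpoint close to $x$,'' whereas you carry it out explicitly via the convergence-group action of $\Gamma$ on $\cD$ and the trapping argument $\gamma_n^{-1}(\overline W)\subset W$ (resp.\ $\gamma_n^{-1}(\cD\setminus W)\subset W$ in the degenerate case $a=x$); this is exactly the right way to justify that claim, and your handling of the degenerate case is correct.
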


\begin{proof} Take a sequence $y_n \in \tilde S$ such that $y_n \to x$ in $\cD$ so that $d_{\tilde S}(y_n, g(y_n)) <K$.  

Since a fundamental domain of $S$ and its image by $g$ have bounded diameter in $\tilde S$ we can assume that, up to changing $K$, one has that all $y_n$ are lifts of the same point $y_0$. That is we assume that $y_n = \gamma_n y_0$ for some $\gamma_n \in \pi_1(S)$. It is no loss of generality  to assume that all $\gamma_n$ are pairwise distinct. It follows that:

$$ d_{\tilde S} (\gamma_n^{-1} g \gamma_n y_0, y_0) \leq K \ \ ; \ \ \forall n. $$

And therefore for all $\eps>0$ there are $n\neq m$ so that $$d_{\tilde S}(\gamma_n^{-1} g \gamma_n y_0, \gamma_m^{-1} g \gamma_m y_0) < \eps$$ \noindent and since $\gamma_n^{-1} g \gamma_n$ are lifts of $f$ it follows that for small $\eps$ one has that $\gamma_n^{-1}g \gamma_n = \gamma_m^{-1} g \gamma_m$. This implies that $g$ commutes with the deck transformation $\gamma_n \gamma_m^{-1}$. 

Since one can choose $m-n$ arbitrarily large, one gets that $A(\gamma_n \gamma_m^{-1})$ has an endpoint close to $x$. This completes the proof. 
\end{proof}

As a consequence, we obtain:

\begin{lema}\label{l.boundarydyn2}
Let $g : \cD \to \cD$ be a lift of a surface homeomorphism $f$.  Assume that $g$ does not commute with any deck transformation. Then every fixed point of $\hat g$ is strongly attracting or strongly repelling.  
\end{lema}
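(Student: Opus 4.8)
The plan is to prove the contrapositive at the level of the dynamics on $\cD$: if some fixed point $x\in\mathrm{Fix}(\hat g)$ fails to be strongly attracting or strongly repelling, I will produce a deck transformation with which $g$ commutes. So suppose $x$ is such a fixed point. First I would use the local picture: since $\hat g$ is an orientation-preserving circle homeomorphism, each fixed point is either (topologically) attracting from a given side, repelling from a given side, or ``semistable''. If $x$ is not strongly attracting nor strongly repelling, then for at least one side of $x$ the following holds: there is $K>0$ such that for every neighborhood $U$ of $x$ in $\cD$ there is an interior point $y\in\tilde S\cap U$ with $d_{\tilde S}(y,g(y))<K$. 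I would extract this from Definition~\ref{def.stronglyattracting} together with Remark~\ref{rem-nofix}: if \emph{no} such $K$ worked, then on that side the hyperbolic distance between the geodesic through the endpoints of $U$ and of $\hat g(U)$ would go to infinity as $U$ shrinks, and (since $g$ sends geodesics to uniformly bounded neighborhoods of the corresponding geodesics $g_\ast(\cdot)$) the points of $\tilde S\cap U$ would be moved arbitrarily far, contradicting the failure of strong attraction/repulsion after checking the remaining possibilities on each side.

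The second and main step is to invoke Lemma~\ref{l.boundarydyn}. Once I have the point $x$ and the constant $K$ with the property that every neighborhood $U$ of $x$ contains a point $y\in\tilde S$ with $d_{\tilde S}(y,g(y))<K$, the lemma directly furnishes a $\gamma\in\pi_1(S)$ such that $g$ commutes with $\gamma$ (and in fact $A(\gamma)$ accumulates at $x$). This contradicts the standing hypothesis that $g$ does not commute with any deck transformation. Hence every fixed point of $\hat g$ on $E$ must be strongly attracting or strongly repelling, which is the assertion.

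\textbf{Main obstacle.} The delicate point is the first step: converting ``not strongly attracting and not strongly repelling'' into the hypothesis of Lemma~\ref{l.boundarydyn}, namely the existence of a uniform $K$ and interior points close to $x$ that are moved a bounded amount. One has to rule out the scenario where $x$ is, say, attracting on one side and repelling on the other with \emph{both} sides being ``strong'' in the quantitative sense, yet $x$ is neither a strongly attracting nor a strongly repelling fixed point according to Definition~\ref{def.stronglyattracting} (which asks for a \emph{two-sided} neighborhood $U$ with $\hat g(\overline U)\subset U$). The resolution is that if one side is strongly attracting and the other side is strongly repelling, there is still a sequence of small intervals straddling $x$ on which $\hat g$ has no net displacement of the bounding geodesic at infinity — more precisely, the ``incoming'' geodesic endpoint on the attracting side and the ``outgoing'' one on the repelling side force the geodesic through the endpoints of such an interval and its image to stay a bounded distance apart — so interior points near $x$ are moved a bounded distance, and Lemma~\ref{l.boundarydyn} still applies. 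I would phrase this by a short case analysis on the (at most) two germs of $\hat g$ at $x$, using that the interesting alternative to the strong dichotomy always produces bounded displacement of some interior sequence $y_n\to x$.
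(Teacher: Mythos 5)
Your proof is correct and rests on the same two ingredients as the paper's: Lemma~\ref{l.boundarydyn} and the fact (used in Remark~\ref{rem-nofix}) that $g$ sends geodesics into uniformly bounded neighborhoods of the straightened geodesics $g_\ast(\cdot)$. The only real difference is the direction of the argument. The paper argues directly: from the hypothesis and Lemma~\ref{l.boundarydyn} it immediately gets, for any ray $r$ landing at $x$, that $d_{\tilde S}(r(N),g(r(N)))\to\infty$; then the geodesics $\partial V_N$ orthogonal to $r$ at $r(N)$ are pushed by $g$ to quasigeodesics fellow-traveling $\partial V_{N'}$ with $|N-N'|\to\infty$, and since $\partial V_N$ and $\partial V_{N'}$ are disjoint and nested one way or the other, this forces $\hat g$ of the boundary arc to be nested in the arc (or vice versa) by a definite amount --- i.e.\ strong attraction or strong repulsion. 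You prove the contrapositive, which is logically equivalent but requires the case analysis on the germs of $\hat g$ at $x$ that you flag as the delicate point; the ``semistable'' scenario you single out is dispatched automatically in the direct version, because interleaved endpoints would make $g_\ast(\partial V_N)$ cross $\partial V_N$, which is incompatible with $g(r(N))$ sitting boundedly close to $r$ at distance $\ge K(N)$ from $r(N)$. Also note that your leap from ``the geodesic through the endpoints of $U$ and that of $\hat g(U)$ stay bounded distance apart'' to ``some interior point near $x$ has bounded displacement'' is not immediate --- a priori $g$ could slide points a long way along the nearly-coincident geodesics --- and closing it requires exactly the ray/fellow-traveling argument the paper makes explicit. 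So the two proofs have the same technical content; the paper's direction just packages it without the case split.
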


\begin{proof}
Let $x$ be a fixed point of $\hat g$ in $E$ and let $r:[0,\infty] \to \cD$ be a geodesic ray landing on $x$.

By Lemma \ref{l.boundarydyn} we know that there is a basis of neighborhoods $\{U_K\}$ of $x$ in $\cD$ such that for every $y \in U_K \cap \tilde S$ the distance  $d_{\tilde S}(y, g(y))>K$. 

Consider a basis of neighborhoods $\{V_N\}$ of $x$ obtained by taking the geodesics orthogonal to $r$ at the point $r(N)$. It follows that there is a function $K: \mathbb{Z}_{>0} \to \mathbb{Z}_{>0}$ such that $V_N \subset U_{K(N)}$ and $K(N) \to \infty$ with $N$. 

Using the fact that the image by $g$ of a geodesic is a uniform quasigeodesic, one can then see that the image of the boundary of $V_N$ is a quasigeodesic more or less orthogonal to the quasigeodesic $g(r)$ ending in $x$ (one can compute the precise bounds for the angle using the constants of the quasigeodesics and some hyperbolic trigonometry). Since the distance between $g(r(N))$ and $r(N)$ is larger than $K(N)$ we get that the point has to be either super attracting or super repelling. 
\end{proof}

Let us now explain the proof of Theorem \ref{teo.boundaryDehn}: 

\begin{proof}[Outline of the proof of Theorem \ref{teo.boundaryDehn}] 
By assumption (which amounts to take $f$ up to homotopy and iterate), $f$ fixes all the curves in a collection $\{c_1, \ldots, c_k\}$ decomposing $S$ in a finite family $S_1, \ldots, S_\ell$ of closed surfaces such that are fixed by $f$ and such that $f$ induces the identity on the fundamental group each subsurface. We assume that the decomposition is minimal in the sense that one cannot glue two surfaces and have the same property. Since $f$ is not homotopic to the identity then  $k\geq 1$ (but note that $\ell$ still could be $1$ if the collection $c_1, \ldots, c_k$ is not separating).  

The lift $g$ of $f$ acts on a graph defined by putting vertices in each connected component of the lifts of the surfaces $\{S_i\}$ and an edge joining the vertices associated to subsurfaces which intersect in $\tilde S$. It is easy to see that this graph is a tree.

Assume that $g$ fixes some connected component $\tilde S_j$ of the lift of some subsurface $S_j$. Then, restricted to $\tilde S_j$, the map $g$ must be a bounded distance away from some deck transformation $\gamma$ fixing $\tilde S_j$ because $f$ induces the identity on $S_j$ and moreover $g$ commutes with $\gamma$. If $\gamma$ is the identity, it follows that the fixed point set of $\hat g$ is exactly the boundary at infinity of the subsurface $\tilde S_j$ which is a Cantor set. Since none of the fixed points can be strongly attracting or repelling Lemma \ref{l.boundarydyn} then implies that the situation claimed in the statement is verified. If $\gamma$ is not peripheral in $S_j$ this implies that the action of $\hat g$ resembles that of the dynamics of $\gamma$ at infinity in that it has exactly the same fixed points which are attracting and repelling (but not strongly attracting/repelling). If $\gamma$ is peripheral, one looks at the adjacent surface which is also fixed by $g$ and the same analysis gives the statement of Theorem \ref{teo.boundaryDehn}. 

If $g$ does not fix any such connected component, one gets an axis which must be represented by a geodesic in $\cD$ whose endpoints are the unique fixed points of $\hat g$. Notice that $g$ cannot commute with any deck transformation because it would correspond to a closed geodesic intersecting some of the the curves $c_i$ where $f$ acts as a Dehn-twist. Now, Lemma \ref{l.boundarydyn2} implies these fixed points must be strongly attracting and repelling.  
\end{proof}

We end with an outline of Theorem \ref{teo.boundaryPA} and its Addendum \ref{addPA}. 

\begin{proof}[Outline of the proof of Theorem  \ref{teo.boundaryPA} and Addendum \ref{addPA}] Up to homotopy and iterate, we will assume that $f$ preserves each of the subsurfaces $\{S_i\}$ (compare with \cite[Section 2]{HT}) defined above and is such that in each pseudo-Anosov piece preserves a pair of transverse geodesic laminations (see \cite[Sectiona 3 and 4]{HT}). We refer the reader also to \cite[Section 9]{Miller} for another treatment of this case. 
 
Let $S_i$ a surface where $f$ is pseudo-Anosov and let $\Lambda^s$ and $\Lambda^u$ be the geodesic laminations invariant by $f$ (see \cite[Section 3]{HT}). For $\tilde S_i$ a lift of $f$ we can take $g$ a lift fixing $\tilde S_i$ but which does not preserve any boundary component of $\tilde S_i$ in $\tilde S$. One can see that $g$ corresponds to a \emph{non-peripheral} Nielsen class of $f|_{S_i}$ (c.f. \cite[page 183]{HT}) and therefore has at least two attracting fixed points and two repelling fixed points at infinity and all fixed points at infinity are limit points of invariant curves of the lifted laminations $\Lambda^s$ and $\Lambda^u$. By Theorem \ref{teo.boundarygeneral} the map $g$ cannot commute with any deck transformation. Then, the action at those points is alternatingly strongly attracting and repelling by Lemma \ref{l.boundarydyn2}. It can be seen that it is impossible that all non-peripheral Nielsen classes have more than four boundary points as more than four points force some open polygonal region inside the lift of $\Lambda^s$ or $\Lambda^u$ and each such component occupies some hyperbolic area (see e.g. \cite[Lemma 4.1]{HT}) so at least one must have four fixed endpoints. This establishes the existence of a lift with the desired properties (and therefore a proof of Theorem \ref{t.goodliftsurfacemap}). 

Now, we must show that if $\mathrm{Fix}(\hat g) \cap \mathrm{Fix}(\gamma) = \emptyset$ for all $\gamma \in \Gamma$ then we must be in the first situation. If $g$ does not fix any $\tilde S_i$, the same argument as in Theorem \ref{teo.boundaryDehn} implies that there are exactly one super attracting and one supper repelling fixed point. If $g$ fixes some $\tilde S_i$, the hypothesis implies that in the projected surface $S_i$ the map $f$ is pseudo-Anosov. By assumption $g$ cannot preserve any boundary component of $\tilde S_i$. Then, one must consider two cases, either $g$ fixes some leaf of the lifted lamination $\tilde \Lambda^s$ of $\Lambda^s$. In this case, $g$ must have a fixed point in each of the fixed leaves, and therefore must preserve transverse leaves of the transverse lamination $\tilde \Lambda^u$ lifted of $\Lambda^u$. This gives an even number greater or equal to four fixed points in the boundary which are alternatingly strongly attracting or repelling (due to Lemma \ref{l.boundarydyn2}). 

If $g$ does not preserve any leaf of $\tilde \Lambda^s$ then it follows that it acts freely on the leaf space of the lamination which is a tree. Therefore, there is an invariant axis, and it follows that this defines one attracting and one repelling point at infinity in $E$ for the action of $g$. By Lemma \ref{l.boundarydyn2} these points must be strongly attracting and repelling. (We remark here that it is always possible to choose lifts with this property.) 

Finally, to get the addendum, let $\gamma \in \pi_1(S)$ such that $\mathrm{Fix}(\gamma) \subset \mathrm{Fix}(\hat g)$, then, by Theorem \ref{teo.boundarygeneral} we get that if the sets coincide the points cannot be strongly attracting or repelling. Otherwise, case one is obtained if $\gamma$ corresponds to the boundary of a surface where $f$ is pseudo-Anosov and $g$ corresponds to a lift fixing the peripheral Nielsen class (c.f \cite[page 184]{HT}). The last case is treated similar to Theorem \ref{teo.boundaryDehn}.
\end{proof}

\end{document}